\theoremstyle{plain}
\newtheorem{theorem}{Theorem}[section]
\newtheorem{corollary}[theorem]{Corollary}
\newtheorem{lemma}[theorem]{Lemma}
\newtheorem{proposition}[theorem]{Proposition}
\theoremstyle{definition}
\theoremstyle{remark}
\newtheorem*{remarks}{Remarks}
\numberwithin{equation}{section}
\title[Olshanski spherical functions for   motion groups]{ Olshanski spherical 
functions for infinite dimensional  motion groups of fixed rank}
\author{Margit R\"osler} 
\address{Institut f\"ur Mathematik, Universit\"at Paderborn, Warburger Str. 100,
33098 Paderborn, Germany}
\email{roesler@math.upb.de}
\author{Michael Voit}
\address{Fakult\"at Mathematik, Technische Universit\"at Dortmund,
          Vogelpothsweg 87,
          D-44221 Dortmund, Germany}
\email{michael.voit@math.tu-dortmund.de}
\subjclass[2010]{Primary 43A90; Secondary 33C80, 43A85, 22E66}
\keywords{Spherical functions, Olshanski spherical
pairs, Bessel functions on matrix cones, 
 Dunkl theory, positive definite functions,
 multivariate beta distributions}
\begin{document}

\begin{abstract}
Consider the Gelfand pairs $(G_p,K_p):=(M_{p,q} \rtimes U_p,U_p)$ associated
with  motion groups
over  the fields $\mathbb F=\mathbb R,\mathbb C,\mathbb H$ with
$p\geq q$ and fixed $q$
as well as the inductive limit  $p\to\infty$,
 the Olshanski spherical pair $(G_\infty,K_\infty)$. We
classify
all
 Olshanski spherical functions  of $(G_\infty,K_\infty)$ as functions on  the
cone $\Pi_q$
of positive semidefinite $q\times q$-matrices
and show that they
 appear as (locally) uniform  limits of  spherical functions of $(G_p,K_p)$ as
$p\to\infty$. The latter are given by Bessel functions on $\Pi_q$.
Moreover, we determine all positive definite Olshanski spherical functions
and discuss related positive integral representations
 for matrix Bessel functions.
We also extend the results to the pairs
$(M_{p,q} \rtimes (U_p\times U_q),(U_p\times U_q))$ which are related to the
Cartan motion groups of non-compact Grassmannians. Here
Dunkl-Bessel functions of type B (for finite $p$)  and of
type A (for $p\to\infty$) appear as spherical functions.
\end{abstract}

\maketitle

\section{Introduction}
Let $M_{p,q}:=M_{p,q}(\mathbb F)$ be the vector space of  $p\times
q$-matrices over one of the fields
 $\mathbb F=\mathbb R,\mathbb C,\mathbb H$ and $U_p = U_p(\mathbb F)$ the group
of unitary $p\times p$-matrices over $\mathbb F$.
Consider the Gelfand pairs $(G_p,K_p):=(M_{p,q} \rtimes U_p,U_p)$ associated
with motion groups
over $\mathbb F$ where
  $q$ is fixed and $p\ge q$ is increasing.
The inductive limit
$(G_\infty,K_\infty):=(  \lim_{\rightarrow} G_p ,\lim_{\rightarrow} K_p) =
(M_{\infty, q}\rtimes U_\infty, U_\infty)$
  is an  Olshanski spherical pair;
 see \cite{F}, \cite{Ol1}, \cite{Ol2} for the notion.
The main purpose of this paper is to classify all Olshanski spherical
functions of this pair, i.e.,
 the  continuous $K_{\infty}$-biinvariant functions $\phi:G_\infty\to \mathbb C$
 which satisfy
 the product
formula
\begin{equation}\label{def-ol-sph-intro}
\phi(g)\phi(h)\,=\lim_{p\to\infty}\int_{K_p} \phi(gkh)\>
dk\quad\quad\text{for}\quad g,h\in G_\infty
\end{equation}
with respect to the normalized Haar measures $dk$ on $K_p$. We shall obtain a
 classification in terms of certain exponential functions which relies on the
fact that
 for integers $p\geq q$ as well for $p=\infty$, the double coset space
$G_p//K_p$ may
be identified with
 the cone  $\Pi_q$ of positive semidefinite $q\times q$-matrices over $\mathbb
F$,  and that for
finite $p$,
 formula (\ref{def-ol-sph-intro})  can be made explicit by the
results of
\cite{R1}.
It is  known (see e.g.~\cite{H}, \cite{FK}, \cite{R1}
 and references cited there) that for finite $p$, Bessel functions $J_\mu$ of
index $\mu=pd/2$ on the cone
  $\Pi_q$ provide
  spherical functions of $(G_p, K_p);$ here $d:=dim_{\mathbb
R}\mathbb F=1,2,4$. Moreover, it is also known that
under suitable rescaling, the Bessel functions $J_\mu$  converge (locally)
 uniformly to  exponential functions, see \cite{Ol1} and \cite{RV2}, and
references therein. From this we obtain that
all Olshanski spherical functions appear as
(locally) uniform
 limits of  spherical functions
 on $(G_p,K_p)$ for  $p\to\infty$.

 The classification of those
 Olshanski spherical functions which are positive definite is also easily
achieved.
The connection between positive definite spherical
functions of $(G_p, K_p)$ with different values of $p$  leads to
the existence of  positive integral representations for the involved matrix
Bessel
functions $J_{pd/2}$ as already explained by Schoenberg \cite{S} in a general
setting.
 Following mainly \cite{H} and \cite{FK} we shall
derive  explicit formulas for these  integral representations of
$J_\nu$ in terms of $J_\mu$
for $\nu\ge\mu$   in two different ways. A comparison of both appoaches
will imply an identity for projections of beta distributions on matrix cones.

Besides the Gelfand pairs $(M_{p,q}\rtimes U_p, U_p)$ we also consider the
Cartan motion
pairs
$(M_{p,q} \rtimes (U_p\times U_q),U_p\times U_q)$  and the associated
Olshanski
 spherical functions as $p\to\infty$. For finite $p$, the
spherical functions are now
  Dunkl-Bessel functions of type B which converge for $p\to\infty$ to certain
Dunkl-Bessel functions of type A, and these in turn are just the Olshanski
spherical functions
of $(M_{\infty,q} \rtimes (U_\infty\times U_q), U_\infty\times U_q)$; see
\cite{BF}, \cite{Du}, \cite{O}, \cite{R1}, \cite{RV1} for the background.

The approach  to  Olshanski spherical functions taken in this paper is similar
to the one for
noncompact infinite dimensional Grassmannian manifolds in \cite{RKV} where the
finite dimensional
spherical functions are  Heckman-Opdam hypergeometric
 functions of type B which converge  to
hypergeometric
 functions of type A. In this case,
 the classification  of the Olshanksi spherical functions is
based on a product formula in \cite{R2}. The present
paper as well as  \cite{RKV} were partly motivated
by related results  in \cite{DOW}.

Our approach via special functions
  does not use explicit representation theory for
 Olshanski spherical  pairs
 in contrast to \cite{Ol1}, \cite{Ol2}, \cite{P} or further related
references.
This has the slight advantage that we need not restrict to positive definite
spherical functions first, but can deal with general spherical
functions from the beginning.

The paper is organized as follows: In Sections 2 and 3 we collect
 some facts about matrix Bessel functions and Dunkl-Bessel functions of
 type B and type A.
The Gelfand pairs
$(M_{p,q} \rtimes U_p,U_p)$ and the corresponding Olshanski spherical functions
will
 then be studied in Section 4,
while Section 5 is devoted to the pairs $(M_{p,q} \rtimes (U_p\times
U_q),U_p\times U_q)$.
In Section 6 we  return to  matrix Bessel functions  and study their positive
integral
 representations.

\section{Bessel functions  on matrix cones}

In this section we collect some  facts about Bessel functions on matrix
cones. The material is mainly taken from
 \cite{FK},  \cite{H},  and \cite{R1}, and is in part
slightly generalized.

Let $\mathbb F$ be one of the fields $ \mathbb R, \mathbb C$
or  $\mathbb H$ with real dimension $d=1,2$ or $4$ respectively.
Denote the usual conjugation in $\mathbb F$ by $t\mapsto \overline t$,
the real part  of $t\in \mathbb F$ by $\mathfrak R t = \frac{1}{2}(t + \overline
t)$.

For $p,q\in \mathbb N$ we denote  by $M_{p,q}:=M_{p,q}(\mathbb F)$
the vector space of all  $p\times q$-matrices
 over $\mathbb F$ and put $M_q:= M_q(\mathbb F):=   M_{q,q}(\mathbb F)$ for
abbreviation.
Let further
\[H_q = H_q(\mathbb F)= \{x\in M_{q}(\mathbb F): x= x^*\}\]
be the space of Hermitian $q\times q$-matrices over $\mathbb F$, where $\,x^* =
\overline x^t$ denotes the usual involution on $M_q$.
The  space $H_q$ is a real Euclidean
 vector space with scalar product
 $\langle x, y\rangle :=  {tr}(xy)$  and
 norm $\|x\|=\langle x, x\rangle^{1/2}$,   where  ${tr}$ is the trace. Notice
that
 ${tr}(xy)$ is real for $x,y\in H_q$. $H_q$ is a (Euclidean) Jordan algebra
with the usual Jordan product $x\circ y= \frac{1}{2}(xy+yx).$
The real dimension of $H_q$ is
$$n:=n(q,\mathbb F):=dim_{\mathbb R}H_q= q + \frac{d}{2}q(q-1).$$
We also need the complexification $H_q^{\mathbb C}$
of $H_q$ which is a Jordan algebra over $\mathbb C$ in the natural way. The
extension of the
scalar product $\langle\, .\,,\,.\,\rangle\,$  to a hermitian scalar product on
$H_q^{\mathbb C}$
will  again be denoted by $\langle x, y\rangle$.
For $\mathbb F=\mathbb R$ or $\mathbb C$,  the space $H_q^{\mathbb C}$ may be
written as
$$H_q^{\mathbb C}=\{a+ib:\> a,b\in H_q(\mathbb F)\}\subset M_q(\mathbb C).$$
 More precisely, for
$\mathbb F=\mathbb R$
we have $H_q^{\mathbb C}(\mathbb R)=\{a\in M_q(\mathbb C):\> a^t=a\}$, while for
$\mathbb F=\mathbb C$, $H_q^{\mathbb C}(\mathbb C)= M_q(\mathbb C)$;
c.f.~Section VIII.5 of \cite{FK}. The complexification of the Jordan algebra
$H_q(\mathbb H)$ can be desribed as follows, c.f.
Sections V.2 and  VIII.5 of \cite{FK}: One realizes matrices
$z\in H_q(\mathbb H)$ as complex Hermitian
matrices
$$z=\begin{pmatrix} x&y\\ -\bar y&\bar x\end{pmatrix}\in H_{2q}(\mathbb C)$$
where $x\in H_{q}(\mathbb C)$ is Hermitian and $y\in Skew_q(\mathbb C)$ is
skew-symmetric. Then with
$J:=\begin{pmatrix} 0&I_q\\ -I_q& 0\end{pmatrix}\in M_{2q}$, the mapping
$$z\mapsto -J z=\begin{pmatrix} \bar y&-\bar x \\ x & y\end{pmatrix}
\in Skew_{2q}(\mathbb C)$$
defines a real Jordan algebra isomorphism from $H_q(\mathbb H)$ onto the
Jordan algebra $V_q:=\{u\in Skew_{2q}(\mathbb C): \> u^*=JuJ\}$ with Jordan
product $\,u\circ v:=\frac12(uJv+vJu)$. The complexification of this
Jordan algebra is just $Skew_{2q}(\mathbb C)$ (with the same Jordan-product).
We thus identify
 $H_q^{\mathbb C}(\mathbb H)$ and $Skew_{2q}(\mathbb C)$ as complex Jordan
algebras.


Let
\[\Pi_q:=\{x^2:\> x\in H_q\} = \{ x^*x: x\in H_q\}\]
denote the set of all positive semidefinite
matrices in $H_q$, and $\Omega_q$ its
interior consisting of all strictly positive
definite matrices. $\Omega_q$ is a symmetric cone; see \cite{FK} for details.
On $ H_q$ we use the standard partial ordering
\[x\le y :\Longleftrightarrow \,
y-x\in\Pi_q\,.\]

To define  Bessel functions,  we need
the spherical polynomials
\[ \Phi_\lambda (x) = \int_{U_q} \Delta_\lambda(uxu^{-1})du, \quad x\in H_q\]
which are indexed
by partitions $\lambda = (\lambda_1 \geq \lambda_2\geq \ldots \geq \lambda_q)
\in \mathbb N_0^q$ (for short, $\lambda \geq 0$).
Here
$du$ denotes the normalized Haar measure of $U_q=U_q(\mathbb F)$, and $
\Delta_\lambda$  is the power function
\[ \Delta_\lambda(x) := \Delta_1(x)^{\lambda_1-\lambda_2}
\Delta_2(x)^{\lambda_2-\lambda_3} \cdot\ldots\cdot \Delta_q(x)^{\lambda_q}\quad
(x\in H_q)\]
with the  principal minors $\Delta_i(x)$ of $x$; see
\cite{FK}.
There is a renormalization
$Z_\lambda = c_\lambda \Phi_\lambda$
with suitable constants $c_\lambda >0$ depending on $\Pi_q$
such that
\begin{equation}\label{power-tr}
(tr \,x)^k \,=\, \sum_{|\lambda|=k} Z_\lambda(x)
\quad\quad\text{for}\quad
 k\in \mathbb N_0\,;
\end{equation}
see Section XI.5.~of \cite {FK} where the $Z_\lambda$ are called
zonal polynomials. By construction, the $Z_\lambda$ are invariant under
conjugation by $U_q$ and thus
 depend only on the eigenvalues of their argument.
More precisely, for $x\in H_q$ with eigenvalues $\xi = (\xi_1, \ldots, \xi_q)\in
\mathbb R^q$,
\begin{equation}\label{identjack}Z_\lambda(x) =
C_\lambda^\alpha(\xi) \quad \text{with}\quad \alpha = \frac{2}{d}\end{equation}
where the $C_\lambda^\alpha$ are the Jack polynomials of index
$\alpha$  (c.f. \cite {FK}, \cite{M}, \cite{R1}).
These are homogeneous of degree $|\lambda|$ and symmetric in
their arguments. Notice that the zonal polynomials $Z_\lambda$ naturally
extend to $H_q^\mathbb C$.

The Bessel functions associated with the cone $\Omega_q$ are defined
as  $_0F_1$-hyper\-geometric series
\begin{equation}\label{power-j}
 J_\mu(z) =
 \sum_{\lambda\geq 0} \frac{(-1)^{|\lambda|}}{(\mu)_\lambda|\lambda|!}
Z_\lambda(z),\quad z\in H_q^\mathbb C
\end{equation}
where
 the generalized Pochhammer symbol $(\mu)_\lambda$ is given by
\[ (\mu)_\lambda =\, (\mu)_\lambda ^{2/d} \quad \text{with }\,
(\mu)_\lambda^\alpha := \,\prod_{j=1}^q
\bigl(\mu-\frac{1}{\alpha}(j-1)\bigr)_{\lambda_j} \quad (\alpha >0). \]
The index $\mu\in \mathbb C$ is required to satisfy $(\mu)_\lambda^\alpha \not=
0$ for
all $\lambda \geq 0$. In this case, $ J_\mu$ is holomorphic on $ H_q^\mathbb C$.

 If $q=1,$ then $\Pi_q=\mathbb R_+$,  and the Bessel function $ J_\mu$ is
independent of $d$ with
\[ J_\mu\bigl(\frac{z^2}{4}\bigr) = j_{\mu-1}(z) \]
where $\, j_\kappa(z) = \,_0F_1(\kappa +1; -z^2/4)\,$
is the  modified Bessel function in one variable.

We need the following product formula for the $J_\mu$ from \cite{R1}:

\begin{proposition}\label{allg-prod-form}
For $\mu\in\mathbb C$ with $\Re\mu>d(q-1/2)$ and all $r,s\in H_q^{\mathbb
C}$,
$$ J_\mu(r^2)J_\mu(s^2)=\frac{1}{\kappa_\mu}\int_{B_q}
J_\mu(r^2+s^2+rws+sw^*r)\> \Delta(I_q-w^*w)^{\mu-\gamma}\> dw$$
where $\Delta$ denotes the determinant on $H_q$,  $dw$  the Lebesgue measure on
the ball
$$B_q:=\{w\in M_q:\> w^*w<I_q\},$$
$\gamma:=d(q-1/2)+1$, and $\kappa_\mu:=\left( \int_{B_q}
\Delta(I_q-w^*w)^{\mu-\gamma}\> dw  \right)^{-1}$.
\end{proposition}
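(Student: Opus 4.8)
The plan is to prove the identity first for the discrete family of indices $\mu=pd/2$ with $p\in\mathbb N$, $p\ge 2q$, where $J_{pd/2}$ has a group-theoretic meaning, and then to pass to general $\mu$ and general $r,s$ by analytic continuation. For $p\ge 2q$ one has $\mu-\gamma=pd/2-d(q-1/2)-1>-1$, so $\Delta(I_q-w^*w)^{\mu-\gamma}$ is integrable over $B_q$ and $\kappa_\mu$ is well defined; moreover, under the identification of $G_p//K_p$ with $\Pi_q$, the map $v\mapsto J_{pd/2}(v^*v)$ is (for a suitable normalization) a spherical function $\phi$ of the Gelfand pair $(M_{p,q}\rtimes U_p,U_p)$. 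I would embed $r,s\in H_q$ as the upper $q\times q$ blocks of elements of $M_{p,q}$ and apply the general product formula $\phi(g)\phi(h)=\int_{K_p}\phi(gkh)\,dk$ valid for spherical functions of any Gelfand pair. Writing out the group law of the semidirect product, the coset of $gkh$ is represented by $r+ks$ with $k\in U_p$, so that
\[
J_{pd/2}(r^2)\,J_{pd/2}(s^2)=\int_{U_p}J_{pd/2}\bigl((r+ks)^*(r+ks)\bigr)\,dk .
\]

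Since $r=r^*$ and $s=s^*$, expanding $(r+ks)^*(r+ks)=r^2+s^2+r^*ks+s^*k^*r$ shows that only the upper-left $q\times q$ block $w$ of $k\in U_p$ contributes to the cross terms, which become $rws$ and $sw^*r$. The heart of the argument, and the step I expect to be the main obstacle, is therefore to compute the image of the normalized Haar measure of $U_p$ under the projection $k\mapsto w$ onto this block. A computation with the matrix polar decomposition on the Stiefel manifold associated with $M_{p,q}$ shows that this pushforward is the matrix beta (Jacobi) distribution on $B_q$ with density proportional to $\Delta(I_q-w^*w)^{d(p-2q)/2+d/2-1}$; the delicate point is to track the Jacobian factors correctly over all three fields $\mathbb F=\mathbb R,\mathbb C,\mathbb H$, which is exactly where the real dimension $d$ enters. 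Since $d(p-2q)/2+d/2-1=\mu-\gamma$ for $\mu=pd/2$, substituting this density into the above integral yields precisely the asserted formula, the constant $\kappa_\mu$ being fixed by the total mass (equivalently, by evaluating at $r=0$).

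It remains to remove the restrictions on $\mu$ and on $r,s$. Both sides of the claimed identity are holomorphic in $\mu$ on the half-plane $\Re\mu>d(q-1/2)$: on the left through the Pochhammer symbols in the defining $_0F_1$-series, and on the right because $w\mapsto\Delta(I_q-w^*w)^{\mu-\gamma}$ and $\kappa_\mu$ depend holomorphically on $\mu$ while the $w$-integral converges locally uniformly. As these two holomorphic functions agree on the arithmetic progression $\{pd/2:p\ge 2q\}$, I would invoke Carlson's theorem, after a linear change of variable turning this progression into the nonnegative integers, to conclude that they coincide on the whole half-plane; this requires uniform growth estimates for $J_\mu$ and $\kappa_\mu$ on vertical strips, which follow from the absolute convergence of the zonal-polynomial series with $r,s$ ranging in a fixed bounded set. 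Finally, both sides are holomorphic in $(r,s)\in H_q^{\mathbb C}\times H_q^{\mathbb C}$, and since $H_q$ is a totally real form of $H_q^{\mathbb C}$ on which the identity has just been established, a further analytic continuation in $(r,s)$ extends it to all $r,s\in H_q^{\mathbb C}$, completing the proof.
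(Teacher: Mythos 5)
Your proposal is correct and takes essentially the same route as the paper: the paper's own proof just cites Eq.\ (3.8) of \cite{R1} for $r,s\in H_q$ and extends to $H_q^{\mathbb C}$ by holomorphy, and the proof of that cited formula in \cite{R1} is precisely your argument --- the spherical product formula for $J_{pd/2}$ on the Gelfand pair $(M_{p,q}\rtimes U_p,U_p)$ with $p\ge 2q$, the pushforward of Haar measure on $U_p$ under truncation giving the density proportional to $\Delta(I_q-w^*w)^{pd/2-\gamma}$ on $B_q$, and Carlson's theorem to continue analytically in $\mu$. Your final continuation in $(r,s)$ from the totally real form $H_q$ to $H_q^{\mathbb C}$ coincides with the paper's own concluding step.
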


\begin{proof} For  $r,s\in H_q$ we refer to Eq. (3.8) of \cite{R1}.
As both sides of the equation are holomorphic in  $r,s\in H_q^{\mathbb C}$, the
result is true in general.
\end{proof}

 For the limit case $\mu=d(q-1/2)$ there exists a degenerated product formula;
see Section 3.5 of \cite{R1}.

For real $\mu\ge d(q-1/2),$   product formula \ref{allg-prod-form}  leads
to a positive (hypergroup) convolution structure on the Banach space
$M_b(\Pi_q)$ of all bounded
signed Borel measures
on $\Pi_q$, as follows: define first the convolution of point measures
\begin{equation}\label{def-convo}
(\delta_r *_\mu \delta_s)(f) :=
\frac{1}{\kappa_\mu}\int_{B_q} f\bigl(\sqrt{r^2 + s^2 + swr + rw^*\!s}\,\bigr)\,
\Delta(I-ww^*)^{\mu-\rho}\, dw \quad(r,s\in\Pi_q)
\end{equation}
for $f\in C_b(\Pi_q)$, and then extend  this convolution in a weakly
continuous,
bilinear way to  $M_b(\Pi_q)$.
This generates a probability preserving commutative Banach algebra
$(M_b(\Pi_q),*_\mu)$;
see \cite{R1} for details.
For  $\mu=pd/2$ with integers $p\ge 2q$, this convolution is just the double
coset convolution associated with the Gelfand pair
 $( M_{p,q} \rtimes U_p,U_p)$ (see  \cite{R1}) which we shall consider
in more detail in Section 4.

For $s\in H_q^{\mathbb C}$ we  define
 the continuous function
\begin{equation}\label{def-f-s}
f_s^\mu(r):=  J_\mu(\frac{1}{4}rsr) \quad\quad (r\in H_q^{\mathbb C}).
\end{equation}
as well as the function 
\begin{equation}\label{def-phi-s}
\phi_s^\mu(r):=f_{s^2}^\mu(r) = J_\mu(\frac{1}{4}rs^2r),
\end{equation}
where the latter
definition is in 
correspondence with the notion in \cite{R1}.
The following facts slightly generalize  results from \cite{R1}.

\begin{lemma}\label{properties-phi}
\begin{enumerate}\itemsep=-1pt
\item[\rm{(1)}] For all $r,s\in  H_q^{\mathbb C}$,
$\phi_s^\mu(r)=\phi_r^\mu(s)$.
\item[\rm{(2)}] For all $r,t\in\Pi_q$ and  $s\in H_q^{\mathbb C}$,
\begin{equation}\label{phi-s-prod-form}
f_s^\mu(r)f_s^\mu(t)=\int_{\Pi_q} f_s^\mu(z)\>
d(\delta_r*_\mu\delta_t)(z),
\end{equation}
i.e.,  the functions $f_s^\mu$ with  $s\in H_q^{\mathbb C}$ are
multiplicative with respect to
$*_\mu$.
\end{enumerate}
\end{lemma}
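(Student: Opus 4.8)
The plan is to reduce both assertions to the eigenvalue invariance of $J_\mu$ and to the product formula of Proposition \ref{allg-prod-form}. The key elementary fact is that $J_\mu$, being a convergent series in the zonal polynomials $Z_\lambda$, which are $U_q$-invariant and hence polynomials in the coefficients of the characteristic polynomial of their argument, satisfies $J_\mu(MN)=J_\mu(NM)$ whenever $MN,NM\in H_q^{\mathbb C}$, simply because $MN$ and $NM$ share the same characteristic polynomial. (For $\mathbb F=\mathbb H$ one reads this off from the complex realization $H_q^{\mathbb C}(\mathbb H)=Skew_{2q}(\mathbb C)$, under which matrix products become ordinary products of $2q\times 2q$ matrices.) Part (1) is then immediate: with $M=rs$, $N=sr$ we have $\phi_s^\mu(r)=J_\mu(\tfrac14 (rs)(sr))=J_\mu(\tfrac14 rs^2r)$ and $\phi_r^\mu(s)=J_\mu(\tfrac14 (sr)(rs))=J_\mu(\tfrac14 sr^2s)$, and $rs^2r$, $sr^2s$ lie in $H_q^{\mathbb C}$ (e.g. for $\mathbb F=\mathbb R$ one checks $(rs^2r)^t=rs^2r$) and have equal spectra.

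For part (2), which refers to the convolution $*_\mu$ and is thus read for real $\mu\ge d(q-1/2)$, I would first note that both sides of (\ref{phi-s-prod-form}) are holomorphic in $s\in H_q^{\mathbb C}$ — the right-hand side because its $J_\mu$-integrand is holomorphic in $s$ and is integrated against the finite weight $\Delta(I-ww^*)^{\mu-\gamma}dw$ on $B_q$ — so by the identity theorem it is enough to prove the identity for $s$ in the real form, say $s\in\Omega_q$. Writing $s=c^2$ with $c=s^{1/2}\in\Omega_q$ and using the invariance $J_\mu(MN)=J_\mu(NM)$, all occurring values of $J_\mu$ take a symmetric shape: $f_s^\mu(r)=J_\mu(\tfrac14 cr^2c)$, $f_s^\mu(t)=J_\mu(\tfrac14 ct^2c)$, and at the point $z=\sqrt X$ with $X=r^2+t^2+twr+rw^*t$ from (\ref{def-convo}) one finds $f_s^\mu(\sqrt X)=J_\mu(\tfrac14\sqrt X c^2\sqrt X)=J_\mu(\tfrac14 cXc)$. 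Setting $\alpha:=\tfrac12 cr$ and $\beta:=\tfrac12 ct$, a direct expansion gives $\tfrac14 cr^2c=\alpha\alpha^*$, $\tfrac14 ct^2c=\beta\beta^*$, and $\tfrac14 cXc=\alpha\alpha^*+\beta\beta^*+\beta w\alpha^*+\alpha w^*\beta^*$.

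Thus (\ref{phi-s-prod-form}) will follow once I establish the slightly more general product formula
\[ J_\mu(\alpha\alpha^*)\,J_\mu(\beta\beta^*)=\frac{1}{\kappa_\mu}\int_{B_q} J_\mu\bigl(\alpha\alpha^*+\beta\beta^*+\beta w\alpha^*+\alpha w^*\beta^*\bigr)\,\Delta(I-ww^*)^{\mu-\gamma}\,dw \]
for arbitrary $\alpha,\beta\in M_q(\mathbb F)$; inserting the above $\alpha,\beta$ and re-reading the right-hand side through (\ref{def-convo}) reproduces (\ref{phi-s-prod-form}) exactly. To prove this I would use the polar decompositions $\alpha=a\,v_\alpha$, $\beta=b\,v_\beta$ with $a=\sqrt{\alpha\alpha^*}$, $b=\sqrt{\beta\beta^*}\in\Pi_q$ and $v_\alpha,v_\beta\in U_q$. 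Then $\alpha\alpha^*=a^2$, $\beta\beta^*=b^2$, and the unitary change of variables $w\mapsto v_\beta^*\,w\,v_\alpha$ followed by $w\mapsto w^*$ transforms the cross term into $awb+bw^*a$ while leaving $B_q$, the Lebesgue measure $dw$ and $\Delta(I-ww^*)$ invariant, because $v_\alpha,v_\beta$ are unitary. This reduces the displayed identity precisely to Proposition \ref{allg-prod-form} for the Hermitian pair $a,b$ (note $\Delta(I-ww^*)=\Delta(I-w^*w)$, so the two weights agree).

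The main obstacle is exactly this passage from the Hermitian product formula to its $\alpha\alpha^*$-version for general matrices: the algebra is short, but one must verify carefully that the unitary substitution preserves the domain $B_q$ and the weight, and one must cover the non-invertible case of the displayed identity, which I would settle by continuity in $(\alpha,\beta)$ (invertible matrices being dense in $M_q$); this automatically includes all $r,t\in\Pi_q$. The remaining points are routine: the holomorphic extension in $s$ via the identity theorem on the totally real form $H_q\subset H_q^{\mathbb C}$, and, should one wish to include the endpoint $\mu=d(q-1/2)$, the use of the degenerate product formula of Section 3.5 of \cite{R1} in place of Proposition \ref{allg-prod-form}.
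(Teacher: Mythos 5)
Your proof is correct, but it is a genuinely different (and much longer) route than the paper's. The paper's own proof is essentially a citation: both the symmetry $\phi_s^\mu(r)=\phi_r^\mu(s)$ and the multiplicativity of $f_s^\mu$ are already established in Section 3 of \cite{R1} for $r,s,t\in\Pi_q$ (writing $s=\tilde s^{\,2}$ with $\tilde s\in\Pi_q$ to pass between $f_s^\mu$ and $\phi_{\tilde s}^\mu$), and the general statements then follow by holomorphy and the identity theorem, since $\Pi_q$ has non-empty interior in the totally real form $H_q\subset H_q^{\mathbb C}$. You instead rederive the real case of \eqref{phi-s-prod-form} from Proposition \ref{allg-prod-form} alone: the spectral invariance $J_\mu(MN)=J_\mu(NM)$, the substitution $\alpha=\frac12 cr$, $\beta=\frac12 ct$ with $s=c^2$, and the polar-decomposition/unitary change of variables that upgrades the Hermitian product formula to the $\alpha\alpha^*$-version for arbitrary square matrices. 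This is in effect a self-contained reproof of material from \cite{R1} (your displayed formula is close to the form in which the convolution $*_\mu$ is constructed there), and it buys independence from that reference at the cost of length; your analytic continuation in $s$ only, with $r,t\in\Pi_q$ fixed, is exactly what part (2) requires and is the same device the paper uses. Your algebra checks out: $\alpha\alpha^*=\frac14 cr^2c$, $\beta w\alpha^*=\frac14 ctwrc$, the substitutions $w\mapsto v_\beta^*wv_\alpha$ and $w\mapsto w^*$ preserve $B_q$, $dw$ and $\Delta(I-ww^*)=\Delta(I-w^*w)$, and the result matches the argument $r^2+s^2+rws+sw^*r$ of Proposition \ref{allg-prod-form}.

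Two points deserve tightening. First, the step from ``$Z_\lambda$ is $U_q$-invariant on $H_q$'' to ``$Z_\lambda$ is, on all of $H_q^{\mathbb C}$, a polynomial in the coefficients of the characteristic polynomial'' needs the (standard, but worth stating) remark that two polynomials on $H_q^{\mathbb C}$ agreeing on the real form $H_q$ agree identically. Second, your parenthetical for $\mathbb F=\mathbb H$ is not literally accurate: in the paper's realization $H_q^{\mathbb C}(\mathbb H)=Skew_{2q}(\mathbb C)$ the Jordan product is $J$-twisted, $u\circ v=\frac12(uJv+vJu)$, so matrix products do \emph{not} become ordinary $2q\times 2q$ products under the map $z\mapsto -Jz$; what is true is that the ring embedding of $M_q(\mathbb H)$ into $M_{2q}(\mathbb C)$ respects products, which settles part (1) for quaternionic real points, and the complexified case then follows by the same identity-theorem continuation you already use in part (2). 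Finally, the density/continuity argument for singular $\alpha,\beta$ is dispensable, since polar decompositions $\alpha=\sqrt{\alpha\alpha^*}\,v_\alpha$ with $v_\alpha\in U_q$ exist for singular matrices as well (only the unitary factor fails to be unique).
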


\begin{proof} Both statements are known for $r, s, t\in \Pi_q$; see the results
for $\phi_s$ in
Section 3 of  \cite{R1}
 and notice for the second statement that each $s\in\Pi_q$ may be written as
$s=\tilde s^2$ with some unique
$\tilde s\in\Pi_q$. The general statements now follow easily by analytic
continuation from  $\Pi_q$
 to $H_q^\mathbb C$. For this, notice that $H_q^{\mathbb C}\setminus \Pi_q$ is
connected, and that the identity theorem can be applied because $\Pi_q$ has
non-empty interior in $H_q$.
\end{proof}

\section{Dunkl-Bessel functions on Weyl chambers of type B and type
A}\label{Dunkl-Bessel}
There is a close connection between the Bessel functions on  the cone $\Pi_q$
and the theory of Dunkl operators associated with the root system  $B_q$, see
\cite{R1}.
 We briefly  review this connection. We do not go into details of Dunkl
 theory, but refer to \cite{BF}, \cite{Du}, \cite{O}
and \cite{R1}.    For a reduced root system $R\subset \mathbb R^q$ and a
multiplicity
function $k:R\to [0,\infty)$  (i.e. $k$ is  invariant under the
action of the corresponding reflection group),
we denote by $ J_k^R$ the Dunkl-Bessel function associated with $R$ and $k$.
 It is obtained from the Dunkl kernel by symmetrization
 with respect to the underlying reflection group. Dunkl-Bessel functions
generalize
 the spherical functions of Euclidean type symmetric
spaces, which occur for crystallographic root systems and specific discrete
values of $k$, see \cite{O}. For the root system $A_{q-1}= \{\pm(e_i-e_j):\,
i<j\}\subset \mathbb R^q$, the
multiplicity $k$ is a single real parameter. If $k>0,$ then according to
formulas (3.22) and (3.37)  of \cite{BF} the
associated Dunkl-Bessel function is
 the  generalized $_0F_0$-hypergeometric function
\[ J_k^A(\xi,\eta) =\, _0F_0^\alpha (\xi,\eta) := \sum_{\lambda \geq 0}
\frac{1}{|\lambda|!}\cdot
\frac{C_\lambda^\alpha(\xi) C_\lambda^\alpha(\eta)}{C_\lambda^\alpha({\bf
1})}\quad (\xi, \, \eta \in \mathbb C^q),\]
with $\, {\bf 1} = (1,\dots,
1),\, \alpha = 1/k.$
For $k=\frac{d}{2}$ with $d= \text{dim}_\mathbb R \mathbb F,$ the
Dunkl-Bessel functions $\xi\mapsto J_{d/2}^A(\xi, \eta)$ are
known to be the spherical functions of the flat symmetric space
$H_q(\mathbb F)\rtimes U_q(\mathbb F)/U_q(\mathbb F)$ and therefore have the
Harish-Chandra
type integral representation
\begin{equation}\label{int-rep-a-fkt}
 J_{d/2}^A(\xi,\eta) = \int_{U_q(\mathbb F)} e^{\text{tr}(\eta u \xi u^{-1})}
du.\end{equation}
See also  formula (7) of \cite{RV1}, where an alterantive proof of
\eqref{int-rep-a-fkt}
is
given.

For the root system   $B_q = \{\pm e_i , \,\pm e_i \pm e_j: \, i <j \}$,
we have  $k=(k_1, k_2)$ where $k_1$ and $k_2$ correspond to
 the roots $\pm e_i$ and  $\pm e_i \pm e_j$ respectively. The associated
Dunkl-Bessel function is
\[ J_k^B(\xi,\eta) = \, _0F_1^\alpha\bigl(\mu; \frac{\xi^2}{2},
\frac{\eta^2}{2}\bigr)
\quad \text{with }\, \alpha = 1/k_2, \, \mu = k_1 +(q-1)k_2 +1/2\]
where $\xi^2= (\xi_1^2, \ldots, \xi_q^2)$ and
\[_0F_1^\alpha(\mu; \xi, \eta) := \, \sum_{\lambda\geq 0}
 \frac{1}{(\mu)_\lambda^\alpha |\lambda|!}\cdot \frac{C_\lambda^\alpha(\xi)
 C_\lambda^\alpha(\eta)}{C_\lambda^\alpha({\bf 1})}.\]
The Dunkl-Bessel function $J_k^B$  is invariant in both arguments under the
action of the hyperoctahedral group which is generated by sign changes and
permutations of the coordinates.
For suitable multiplicity $k$, it is
related the matrix
Bessel function $J_\mu$ and to the functions
 $\phi_s^\mu(r)=  J_\mu\bigl(\frac{1}{4}sr^2s\bigr)$ of
Section 2, as follows:
Consider the  $B_q$-Weyl chamber
\begin{equation}\label{Weyl_B} C_q^B := \{ \xi= (\xi_1, \ldots, \xi_q) \in
\mathbb R^q: \, \xi_1 \geq \ldots \geq \xi_q \geq 0\}\end{equation}
For $\xi\in C_q^B$ we denote by
$\underline \xi\in \Pi_q$ the  diagonal matrix with entries $\,\xi_1, \ldots,
\xi_q\,.$  Let
\[k(\mu, d) := \bigl(\mu - (d(q-1)+1)/2, \,d/2\bigr).\]
 Then for $\xi,\eta\in
C_q^B,$
\begin{equation}\label{Dunklchar}
 J_{k(\mu,d)}^B(\xi, i\eta)= \int_{U_q} J_\mu\bigl(\frac{1}{4}\underline\eta
 u\underline\xi^2
 u^{-1}\underline\eta\bigr) du;
\end{equation}
see Section 4 of \cite{R1}.
Dunkl-Bessel functions of type $B$ will play an important role in Section 5
of this paper, in connection with the study of the Olshanski
  spherical pairs $(M_{\infty,q}\rtimes (U_\infty\times U_q), U_\infty\times
U_q).$

\section{Olshanski spherical functions related to $M_{\infty,q}\rtimes
U_\infty$}

In this section, we consider Olshanski spherical pairs associated with
matrix cones.
For a general background on  Olshanski spherical pairs and their spherical
functions we refer to Faraut
\cite{F} and
Olshanski \cite{Ol1}, \cite{Ol2}.

 We
fix the field $\mathbb F\in \mathbb \{\mathbb R, \mathbb C, \mathbb H\}$ and
the rank $q\in\mathbb N$ as in Section 2.
Consider the Gelfand pairs $(G_p,K_p)$ with $G_p= M_{p,q} \rtimes U_p$
and $K_p:=U_p$ where $U_p$ acts on $M_{p,q} $ by left multiplication.
 In all  cases, $G_p$
 will be regarded as a closed subgroup of $G_{p+1}$ with $K_p=G_p\cap K_{p+1}$.
 Consider the inductive limits 
 $G_\infty := \lim_{\rightarrow} G_p$ 
and $K_\infty:= \lim_{\rightarrow} K_p$.  Then $(G_\infty,
 K_\infty)$ is an Olshanski spherical pair.
A continuous function $\phi:G_\infty\to \mathbb C$  is called an Olshanski
spherical 
function of $(G_\infty,K_\infty)$  if $\phi$ is $K_{\infty}$-biinvariant and
satisfies the product
formula
\begin{equation}\label{def-ol-sph}
\phi(g)\phi(h)\,=\lim_{p\to\infty}\int_{K_p} \phi(gkh)\>
dk\quad\quad\text{for all } g,h\in G_\infty
\end{equation}
with respect to the normalized Haar measure $dk$.
Further, an Olshanski spherical
function $\phi$ of $(G_\infty,K_\infty)$ is called positive definite 
if  $\phi$ is positive definite on $G_\infty$ in the usual sense, i.e. if for
all $n\in\mathbb N$, $g_1,\ldots,g_n\in G_\infty$ and $c_1,\ldots,c_n\in\mathbb
C$,
$$\sum_{k,l=1}^n c_k \overline{c_l} \,\phi(g_kg_l^{-1})\ge0.$$
Positive definite continuous functions on $G_\infty$ are always bounded.
Furthermore, in our situation
 $$K_\infty g^{-1}K_\infty=K_\infty g K_\infty \quad\quad\text{for} \quad\quad
g\in G_\infty.$$
This implies that positive definite spherical functions on $G_\infty$ are
automatically $\mathbb
R$-valued.

We shall now classify the Olshanski spherical 
functions of $(G_\infty, K_\infty)$ as well as those which are positive definite
in addition.

For this we recapitulate that for each $p$, the space of
double cosets $G_p/\!/K_p$ can be topologically identified with the cone $\Pi_q$
via the homeomorphism
$$ K_p (x,k)K_p  \,\mapsto \,\sqrt{x^*x} \quad\quad \text{for } \,
x\in M_{p,q}, \,k\in K_p\,.$$
 In other words, if for $a\in\Pi_q$ we consider the matrix $\begin{pmatrix}a\\
0\end{pmatrix}\in
M_{p,q}$,  then
the inverse of the above homeomorphism can be written as
$$F_p:\Pi_q\to \,G_p/\!/K_p,\quad r\longmapsto K_pg_a K_p
\quad\quad \text{with} \quad
g_a:=(\begin{pmatrix}a\\ 0\end{pmatrix},I_p).$$ 
By definition of the inductive limit topology, the mapping
$$F:\Pi_q\to \,G_\infty/\!/K_\infty,\quad a\longmapsto K_\infty g_a K_\infty
\quad\quad \text{with} \quad
g_a:=(\begin{pmatrix}a\\ 0_\infty\end{pmatrix},I_\infty)$$ 
also provides a homeomorphism for $p=\infty$.
We will therefore use the agreement that for all integers $p\geq q$ as well as
for
$p=\infty$, 
 continuous and
$K_p$-biinvariant functions on
$G_p$ are identified with   continuous
 functions on $\Pi_q$. When doing so, our notations immediately imply the
following

\begin{lemma}\label{posdef-p-infty}
Let $p_1\ge p_2\ge q$. If a continuous function on $\Pi_q$ corresponds to a
$K_{p_1}$-biinvariant, positive definite  function on $G_{p_1}$, then it also
corresponds to a
$K_{p_2}$-biinvariant, positive definite function on $G_{p_2}$.
Moreover, a continuous function on $\Pi_q$ corresponds to a
$K_{\infty}$-biinvariant, positive definite  function on $G_{\infty}$
 if and only if there exists an integer $p_0\ge1$ such that for 
all integers $p\ge p_0$, it corresponds to a
$K_{p}$-biinvariant, positive definite function on $G_{p}$.
\end{lemma}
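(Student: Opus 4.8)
The plan is to reduce both statements to two elementary facts about positive definite functions on groups, together with the compatibility of the double coset identifications $F_p$ and $F$ under the embeddings $G_{p_2}\hookrightarrow G_{p_1}\hookrightarrow G_\infty$. Throughout, for a continuous function $\psi$ on $\Pi_q$ let $\phi_p$ denote the associated continuous $K_p$-biinvariant function on $G_p$ obtained via $F_p$ (resp.\ $F$ for $p=\infty$). First I would record the compatibility explicitly. For $q\le p_2\le p_1$ (with $p_1$ possibly $\infty$), an element $g=(x,k)\in G_{p_2}$ is sent into $G_{p_1}$ by padding $x\in M_{p_2,q}$ with zero rows to $\begin{pmatrix}x\\ 0\end{pmatrix}\in M_{p_1,q}$ and embedding $k$ block-diagonally into $U_{p_1}$. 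Since $\begin{pmatrix}x\\ 0\end{pmatrix}^*\begin{pmatrix}x\\ 0\end{pmatrix}=x^*x$ and $K_{p_2}\subset K_{p_1}$, the $\Pi_q$-coordinate $\sqrt{x^*x}$ of $g$ is the same whether $g$ is viewed in $G_{p_2}$ or in $G_{p_1}$. Consequently $\phi_{p_1}$ restricts on the subgroup $G_{p_2}$ to $\phi_{p_2}$.

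Granting this, Part 1 follows at once from the standard fact that the restriction of a positive definite function to a subgroup is again positive definite: for $g_1,\dots,g_n\in G_{p_2}\subset G_{p_1}$ all products $g_kg_l^{-1}$ lie in $G_{p_2}$, so the defining inequality for $\phi_{p_1}$ already yields the one for $\phi_{p_2}=\phi_{p_1}|_{G_{p_2}}$. The same argument applied to $G_p\subset G_\infty$ settles the ``only if'' direction of Part 2: if $\phi_\infty$ is positive definite on $G_\infty$, then $\phi_p=\phi_\infty|_{G_p}$ is positive definite for every $p\ge q$, so one may take $p_0=q$.

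For the ``if'' direction I would exploit the defining property of the inductive limit, namely $G_\infty=\bigcup_{p}G_p$ as a set. Any finite family $g_1,\dots,g_n\in G_\infty$ is therefore contained in a single $G_p$ for $p$ large enough, and with it all the products $g_kg_l^{-1}$. Thus the positive definiteness inequality for $\phi_\infty$ at $g_1,\dots,g_n$ coincides with the one for $\phi_p$, and so $\phi_\infty$ is positive definite on $G_\infty$ as soon as $\phi_p$ is positive definite on $G_p$ for \emph{every} $p\ge q$. By hypothesis $\phi_p$ is positive definite for all $p\ge p_0$, and Part 1 upgrades this to all $q\le p\le p_0$ as well; hence positive definiteness holds for every $p\ge q$, and $\phi_\infty$ is positive definite.

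I do not expect a genuine obstacle here, as the argument is a formal consequence of the inductive structure. The one point that must be handled with care is the compatibility statement of the first step, since the entire argument hinges on $\phi_{p_1}|_{G_{p_2}}$ being \emph{exactly} $\phi_{p_2}$; once the embeddings are written out this is immediate from $\begin{pmatrix}x\\ 0\end{pmatrix}^*\begin{pmatrix}x\\ 0\end{pmatrix}=x^*x$. Note finally that continuity of $\phi_\infty$ is automatic, being the transport of the continuous $\psi$ through the homeomorphism $F$, so the only real content of the lemma concerns positive definiteness.
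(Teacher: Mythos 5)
Your proof is correct and takes essentially the same approach as the paper: the paper's own (very terse) argument rests on exactly your compatibility observation, phrased via the canonical projections $P_p\colon G_p\to G_p/\!/K_p\equiv\Pi_q$, $P_p((x,k))=\sqrt{x^*x}$, satisfying $P_{p_1}|_{G_{p_2}}=P_{p_2}$, with both parts then following from restriction of positive definite functions to subgroups and the fact that any finite subset of $G_\infty$ lies in some $G_p$. Your write-up merely spells out the steps the paper declares ``immediate,'' so there is nothing to correct.
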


\begin{proof} The first  statement follows immediately from the definition of
  positive definite functions and the fact that for  $p_1\ge p_2\ge q$
(where possibly
  $p_1=\infty$), the canonical projections
\[P_p:G_p\to\,
  G_p/\!/K_p\equiv\Pi_q\,,\quad P_p((x,k)):=\sqrt{x^*x}\]
 satisfy
$P_{p_1}|_{G_{p_2}}=P_{p_2}$.
The remaining part of the second statement  follows also easily by these
arguments.
\end{proof}

We next turn to the (not necessarily positive definite) spherical functions
 of $(G_p, K_p)$ for finite $p\geq q.$ 
We know from
Lemma \ref{properties-phi} that the functions $f_s^{pd/2}$  of Section 2 
with $s\in H_q^{\mathbb C}$  are spherical. On the other hand, using results
 of Wolf \cite{W1} in combination
with an integral representation of the matrix Bessel functions  $f_s^{pd/2}$ 
 (see Eq. (3.4) of  \cite{R1} and Propos. XVI.2.3 of \cite{FK}), we obtain the
following classification:

\begin{theorem}\label{class-spher-1}
Let $p\geq q$ be finite. Then $\{f_s^{pd/2}: \> s\in
H_q^{\mathbb C}\}$ is the set of all  spherical functions
of $(G_p,K_p)$.
Moreover, the  set of positive definite  spherical functions of
$(G_p,K_p)$ is given by  $\{f_{s}^{pd/2}: s\in \Pi_q\}.$
\end{theorem}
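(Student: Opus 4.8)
The plan is to regard $(G_p,K_p)=(M_{p,q}\rtimes U_p,U_p)$ as a motion-group Gelfand pair and to combine two ingredients: Wolf's description of the spherical functions of such pairs \cite{W1}, and the orbital integral representation of the matrix Bessel function (Eq.~(3.4) of \cite{R1}, cf.~Proposition XVI.2.3 of \cite{FK}). That each $f_s^{pd/2}$ is spherical is the easy half: by the integral representation $f_s^{pd/2}$ equals a $U_p$-average of an exponential character of $M_{p,q}$, and such orbital integrals are spherical functions of $(G_p,K_p)$. Alternatively, whenever $pd/2\ge d(q-1/2)$ so that the convolution $*_{pd/2}$ of \eqref{def-convo} is defined, Lemma \ref{properties-phi}(2) together with $f_s^{pd/2}(0)=J_{pd/2}(0)=1$ yields sphericity directly through the identification $G_p/\!/K_p\cong\Pi_q$.

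For the converse I would invoke Wolf's classification, which asserts that every continuous spherical function of $(G_p,K_p)$ is an orbital integral
\[\varphi_\zeta(g_a)=\int_{U_p} e^{i\,\mathfrak R\,\text{tr}(\zeta^* u\binom{a}{0})}\,du,\]
where $\zeta$ runs over the complexified dual of $M_{p,q}$ and two parameters give the same function precisely when they lie in one $U_p$-orbit under left multiplication. Each orbit is determined by the $q\times q$ Gram matrix of $\zeta$, and as $\zeta$ varies this matrix ranges over all of $H_q^{\mathbb C}$ (using $p\ge q$, and for $\mathbb F=\mathbb H$ the identification $H_q^{\mathbb C}(\mathbb H)\cong Skew_{2q}(\mathbb C)$ of Section 2). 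The integral representation then evaluates $\varphi_\zeta(g_a)=J_{pd/2}(\tfrac14 a s a)=f_s^{pd/2}(a)$ with $s$ the Gram matrix of $\zeta$, so that every spherical function equals some $f_s^{pd/2}$, $s\in H_q^{\mathbb C}$; this proves the first assertion.

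For the positive definite statement, given $s\in\Pi_q$ I would put $\eta=\binom{\sqrt s}{\,0\,}\in M_{p,q}$, so that the Gram matrix of $\eta$ is $s$ and $x\mapsto e^{i\mathfrak R\,\text{tr}(\eta^* x)}$ is a unitary character of $M_{p,q}$; its $U_p$-average is a $U_p$-invariant positive definite function on $M_{p,q}$, hence extends to a positive definite $K_p$-biinvariant function on $G_p$, which by the integral representation is $f_s^{pd/2}$. Conversely, a positive definite function is bounded, so it suffices to show that $f_s^{pd/2}$ is unbounded for $s\in H_q^{\mathbb C}\setminus\Pi_q$. Using the $U_q$-invariance of $J_\mu$ and the spectral decomposition of $s$, this reduces to the rank-one identity $J_\mu(z^2/4)=j_{\mu-1}(z)$, where $j_{\mu-1}$ is bounded on $\mathbb R$ but grows along directions corresponding to non-real or negative eigenvalues of $s$; thus boundedness forces $s\in\Pi_q$.

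The main obstacle is the general (non positive definite) half of the first assertion. While the positive definite orbit space is transparently $\Pi_q$ via $\eta\mapsto\eta^*\eta$, passing to all spherical functions requires complexifying the spectral parameter by the $\mathbb C$-bilinear rather than the sesquilinear extension of the pairing, so that the Gram matrix sweeps out the full space $H_q^{\mathbb C}$ and not merely the Hermitian positive cone. Verifying this surjectivity, confirming that Wolf's classification indeed supplies the unbounded spherical functions as well, and checking that the integral representation of \cite{R1} persists under analytic continuation to complex $s$ (in the spirit of the continuation argument in Lemma \ref{properties-phi}) is the technical heart, and most delicate for $\mathbb F=\mathbb H$. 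By comparison, the boundedness analysis in the converse of the positive definite statement will be routine.
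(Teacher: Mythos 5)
Your proof of the first assertion follows essentially the same route as the paper: Wolf's classification \cite{W1} of the spherical functions as $U_p$-orbital integrals with complexified parameter, combined with the integral representation of $J_{pd/2}$ from Eqs.~(3.3)--(3.4) of \cite{R1} (Prop.~XVI.2.3 of \cite{FK}), analytically continued in the complex-bilinear pairing, so that $\tilde\phi_y=f^{pd/2}_{y^*y}$. One useful remark on what you single out as the technical heart: the paper never proves surjectivity of the Gram map $\zeta\mapsto\zeta^*\zeta$ onto $H_q^{\mathbb C}$, and it does not need to. Since by Wolf every spherical function equals some $f^{pd/2}_{\zeta^*\zeta}$, while by Lemma \ref{properties-phi}(2) every $f_s^{pd/2}$ with $s\in H_q^{\mathbb C}$ is spherical, the chain of inclusions $\{\text{spherical}\}\subseteq\{f^{pd/2}_{\zeta^*\zeta}\}\subseteq\{f_s^{pd/2}:s\in H_q^{\mathbb C}\}\subseteq\{\text{spherical}\}$ forces all three sets to coincide as sets of functions, which is all the theorem asserts; the delicate orbit and surjectivity analysis (in particular for $\mathbb F=\mathbb H$) can be bypassed entirely. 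Likewise, your parenthetical claim that parameters give equal spherical functions exactly when they are in one $U_p$-orbit is stronger than what Wolf provides for complexified parameters, but it is also never needed.

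For the positive definite assertion you genuinely depart from the paper, which simply quotes Theorem 5.4 of \cite{W1}: the positive definite spherical functions are exactly the $\tilde\phi_y$ with \emph{real} parameter $y\in M_{p,q}$, whence $s=y^*y\in\Pi_q$, and both inclusions follow at once. Your forward direction (average the character attached to $\eta=\begin{pmatrix}\sqrt s\\ 0\end{pmatrix}$ over $U_p$, and use the $U_p$-invariance of the resulting orbit-measure Fourier transform to get positive definiteness on the semidirect product) is correct and self-contained. Your converse via boundedness, however, contains a step that fails as written: a general $s\in H_q^{\mathbb C}$ admits no spectral decomposition --- for $\mathbb F=\mathbb R$ there are nonzero complex symmetric matrices with $s^2=0$ --- so you cannot diagonalize $s$ and reduce to the one-variable identity that way. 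The argument can be repaired by restricting the group variable instead: for $r=t\,vv^*$ with a unit vector $v\in\mathbb F^q$, the matrix $\frac14 rsr$ has rank one with sole nonzero eigenvalue $\frac{t^2}{4}\,v^*sv$, and since $Z_\lambda$ vanishes on rank-one arguments unless $\lambda$ has a single row, $f_s^{pd/2}(r)=\,_0F_1\bigl(\mu;-\frac{t^2}{4}v^*sv\bigr)$. This one-variable function stays bounded in $t$ only if $v^*sv\in[0,\infty)$; writing $s=a+ib$ with $a,b\in H_q$ and demanding this for all $v$ gives $b=0$ and $a\in\Pi_q$. With that fix your boundedness route works (the quaternionic case requiring the same care as elsewhere), at the cost of being considerably longer than the paper's direct citation of Wolf.
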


\begin{proof}
According to \cite{W1} (see also \cite{W2}), 
 the complete set of
spherical functions of $(G_p, K_p)$ can be described as follows: 

Consider $M_{p,q}$ as a real vector space  of dimension $dpq$ with Euclidean
scalar product
\[(x|y):=\Re tr(x^*y)\]
 and extend this form in a bilinear way to the
complexification  $M_{p,q}^{\mathbb C}$
of  $M_{p,q}$. Then it is easily checked that for each $y\in M_{p,q}^{\mathbb
C}$, the
function \begin{equation}\label{SF_Wolf}
\tilde\phi_y(x,v):= \int_{U_p} e^{-i(ux|y)}\> du \quad\quad (x\in M_{p,q},
v\in U_p)\end{equation}
defines a spherical function of  $(G_p,K_p)$.
Moreover, by Theorem 4.4 of
\cite{W1},
 all spherical functions are given in this way, and by Theorem 5.4 of
\cite{W1}, the set of positive-definite
spherical functions is made up by those $\tilde\phi_{y}$  with $y\in
M_{p,q}$.

We next show that all functions of the form \eqref{SF_Wolf} are in fact 
 Bessel functions $f_s^{pd/2}$ for suitable $s\in
H_q^{\mathbb C}$. For this we again regard $M_{p,q}$ and $H_q$ as vector spaces
over $\mathbb R$, and 
denote the complex-bilinear extension of the $\mathbb R$-bilinear mapping
$$M_{p,q}\times M_{p,q}\to M_q,\quad (x,y)\longmapsto x^*y$$
(to a mapping $M_{p,q}^{\mathbb C}\times M_{p,q}^{\mathbb C}\to M_q^{\mathbb
C}$) again by $ x^*y$. For 
$y\in M_{p,q}^{\mathbb C}$ we then obtain easily that $y^*y\in  H_q^{\mathbb
C}$.

Now fix a matrix $y\in  M_{p,q}^{\mathbb C}$. Then $y^*y\in  H_q^{\mathbb C}$,
and for all $r\in \Pi_q$,
$$f_{y^*y}^{pd/2}(r)=J_{pd/2}(\frac{1}{4}ry^*yr)=J_{pd/2}(\frac{1}{4}
(yr)^*yr).$$
We conclude from Eqs. (3.3) and (3.4) of  \cite{R1} (see also Propos. XVI.2.3 of
\cite{FK})
that for all $x\in M_{p,q}$,
\begin{equation}\label{ident-bessel-int}
J_{pd/2}(\frac{1}{4}x^*x)=
\int_{U_p} e^{-i (u\sigma_0 |x)} du \quad \text{ with}\, \,\sigma_0 =
\begin{pmatrix} I_q\\ 0\end{pmatrix}\in M_{p,q}.
\end{equation}
Using the  complex-bilinear and thus analytic extension above, we conclude that 
(\ref{ident-bessel-int}) remains correct for all  $x\in M_{p,q}^{\mathbb C}$.
Now let $r\in\Pi_q$, $y\in M_{p,q}^{\mathbb C}$. Then 
$$J_{pd/2}(\frac{1}{4}(yr)^*yr)=
\int_{U_p} e^{-i (u\sigma_0 |yr)} du.$$
As
$$ (u\sigma_0 |yr)=\Re tr((u\sigma_0 )^* yr)=\Re tr((u\sigma_0r)^*y)=(u\sigma_0r
|y),$$
we obtain
\begin{equation}\label{ident-bessel-wolf}
f_{y^*y}^{pd/2}(r)=\int_{U_p} e^{-i (u\sigma_0r |y)} du=
\tilde\phi_y(\sigma_0 r,v)
\end{equation}
 with arbitrary $v\in U_p\,.$ As the functions on both sides are biinvariant,
and the  $r\in\Pi_q$
form a set of representatives of all double cosets as descibed in the beginning
of this section, 
the proof of the first statement of the theorem is complete.
Moreover, equation (\ref{ident-bessel-wolf}) in combination with Theorem 5.4 of
\cite{W1} leads to the stated classification of the positive definite spherical
functions.
\end{proof}

We mention that the statement about the positive definite  spherical functions
above
can be also obtained by
 hypergroup methods
from Theorem 3.12 of \cite{R1} in combination with results of \cite{J}.

\medskip

We now turn to the case $p=\infty$.
The
 Olshanski spherical 
functions of $(G_\infty, K_\infty)$ can be characterized as follows:

\begin{lemma}\label{limit-product-formula}
A continuous  $K_\infty$-biinvariant function $\phi:G_\infty\to \mathbb C$ 
is  Olshanski spherical 
if and only if the continuous function $\tilde\phi(b):=\phi(g_b)$ on $\Pi_q$
 satisfies  the product
formula
\begin{equation}\label{prod-formel-1}
\tilde\phi(a)\cdot \tilde\phi(b)
\,=\,\tilde\phi(\sqrt{a^2+b^2}), 
 \quad a, b\in \Pi_q.
\end{equation}
\end{lemma}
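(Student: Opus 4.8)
The plan is to reduce both implications to a single statement about the asymptotic behaviour of the convolutions $*_{pd/2}$ as $p\to\infty$. Writing $g\in K_\infty g_aK_\infty$ and $h\in K_\infty g_bK_\infty$ with $a,b\in\Pi_q$, the left-hand side of \eqref{def-ol-sph} equals $\phi(g)\phi(h)=\tilde\phi(a)\tilde\phi(b)$ by $K_\infty$-biinvariance. Hence it suffices to prove, for \emph{every} continuous $K_\infty$-biinvariant $\phi$, the identity
\[
\lim_{p\to\infty}\int_{K_p}\phi(gkh)\,dk=\tilde\phi\bigl(\sqrt{a^2+b^2}\bigr).
\]
Once this is shown, \eqref{def-ol-sph} holds for all $g,h$ precisely when \eqref{prod-formel-1} does, which is the assertion of the lemma. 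Note that this handles both directions at once, since the displayed limit is computed without assuming that $\phi$ is spherical.

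First I would pass to standard representatives. Since $g,h$ already lie in some finite level $G_{p_1}$ and the projections $P_p$ are compatible, I can write $g=k_1g_ak_2$ and $h=k_3g_bk_4$ with all $k_i\in K_{p_1}$. For $p\ge p_1$ we have $K_{p_1}\subset K_p$, so $K_\infty$-biinvariance of $\phi$ together with invariance of the Haar measure on $K_p$ under the substitution $k\mapsto k_2^{-1}kk_3^{-1}$ gives $\int_{K_p}\phi(gkh)\,dk=\int_{K_p}\phi(g_akg_b)\,dk$. I then invoke the fact recalled in Section 2 that for integers $p\ge 2q$ the convolution $*_{pd/2}$ coincides with the double coset convolution of the Gelfand pair $(G_p,K_p)$; under the identification $G_p/\!/K_p\equiv\Pi_q$ this reads
\[
\int_{K_p}\phi(g_akg_b)\,dk=\int_{\Pi_q}\tilde\phi\;d(\delta_a*_{pd/2}\delta_b).
\]

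The analytic core is to show that $\delta_a*_{pd/2}\delta_b\to\delta_{\sqrt{a^2+b^2}}$ weakly as $p\to\infty$. By \eqref{def-convo} with $\mu=pd/2\to\infty$ this reduces to the concentration of the normalized weights $\kappa_\mu\,\Delta(I-ww^*)^{\mu-\rho}\,dw$ at $w=0$ on the ball $B_q$. Since $\Delta(I-ww^*)\le 1$ on $B_q$ with equality only at $w=0$, a routine Laplace-type estimate shows that for each $\varepsilon>0$ the mass carried by $\{\|w\|\ge\varepsilon\}$ decays geometrically in $\mu$ relative to the mass on a small ball about the origin, so the weights tend weakly to $\delta_0$. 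As the argument $\sqrt{a^2+b^2+bwa+aw^*b}$ depends continuously on $w$ and equals $\sqrt{a^2+b^2}$ at $w=0$, the claimed weak convergence follows. I expect this concentration estimate to be the only genuinely analytic step, and hence the main obstacle; the remaining ingredients are bookkeeping with double cosets and the cited product formula.

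A final technical point is that a general Olshanski spherical $\tilde\phi$ need not be bounded, so weak convergence against $C_b(\Pi_q)$ does not apply verbatim. This is circumvented by noting that, as $w$ ranges over the bounded ball $B_q$, the matrix $a^2+b^2+bwa+aw^*b$ stays in a fixed bounded set, so all the measures $\delta_a*_{pd/2}\delta_b$ are supported in one compact set $S\subset\Pi_q$ independent of $p$. Multiplying $\tilde\phi$ by a continuous cutoff equal to $1$ on $S$ replaces the test function by one in $C_c(\Pi_q)$ without changing the integrals, and since $\sqrt{a^2+b^2}\in S$ the limit is $\tilde\phi(\sqrt{a^2+b^2})$. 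Assembling the three steps yields the displayed identity and with it the equivalence asserted in the lemma.
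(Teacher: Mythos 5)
Your proposal is correct and follows essentially the same route as the paper: both translate the Olshanski limit \eqref{def-ol-sph} into the explicit convolution integral \eqref{def-convo} with $\mu=pd/2$ via the identification $G_p/\!/K_p\equiv\Pi_q$, and conclude from the weak convergence of the probability measures $\kappa_{pd/2}^{-1}\,\Delta(I-w^*w)^{pd/2-\gamma}\,dw$ to $\delta_0$ on $B_q$. You merely make explicit two points the paper leaves implicit --- the Laplace-type concentration estimate behind the weak convergence, and the observation that all the measures $\delta_a*_{pd/2}\delta_b$ live in one compact subset of $\Pi_q$ so that possibly unbounded $\tilde\phi$ can be tested after a cutoff --- both of which are sound.
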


\begin{proof} Let $\phi$ be a continuous  $K_\infty$-biinvariant function on
  $G_\infty$ and $\tilde\phi\in C(\Pi_q)$ as described in the lemma.
  Then, by (\ref{def-ol-sph}) and the product formula (\ref{def-convo}),
 $\phi$ is  Olshanski spherical iff $\tilde\phi$
  satisfies
\begin{equation}\label{limit-eq1}
\tilde\phi(a)\cdot \tilde\phi(b)
\,=\,\lim_{p\to\infty} \frac{1}{\kappa_{pd/2}}
\int_{B_q}\tilde\phi(\sqrt{a^2+b^2+awb+bw^*a}) \cdot
\Delta(I-w^*w)^{pd/2-\gamma}\> dw
\end{equation}
for $a,b\in  \Pi_q$ with $\gamma=d(q-1/2)+1$.
The probability measures 
$$\,\kappa_{pd/2}^{-1}\cdot\Delta(I-w^*w)^{pd/2-\gamma} \,
dw\,$$
 are compactly supported in $B_q$ and tend weakly to the point
measure $\delta_0$ for $p\to \infty$. Therefore (\ref{limit-eq1}) is equivalent
to
 \[
\tilde\phi(a)\cdot \tilde\phi(b)
\,=\,\tilde\phi(\sqrt{a^2+b^2})
\]
as claimed.
\end{proof}

We remark that precise estimates for the order of convergence of the
 probability measures $\,\kappa_{pd/2}^{-1}\cdot\Delta(I-w^*w)^{pd/2-\gamma} \,
dw\,$ are given in \cite{V2}.

We now solve the functional equation (\ref{prod-formel-1}):

\begin{lemma}\label{sol-conv-eq1}
A continuous function  $\tilde\phi\in C(\Pi_q)$ with $\tilde\phi(0)=1$
satisfies 
  (\ref{prod-formel-1}) if and only if there exists some  $b\in H_q^{\mathbb
  C}$ such that
\[\tilde\phi(a)=\, \exp\bigl(-\langle a^2,b\rangle\bigr) \,=:\psi_b(a), \quad
a\in \Pi_q\,.\]
\end{lemma}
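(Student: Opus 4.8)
The \emph{if} direction I would dispatch by direct substitution: for $\tilde\phi=\psi_b$ one computes $\psi_b(a)\psi_b(c)=\exp\bigl(-\langle a^2+c^2,b\rangle\bigr)=\psi_b\bigl(\sqrt{a^2+c^2}\bigr)$, and $\psi_b(0)=\exp(0)=1$, so every $\psi_b$ solves (\ref{prod-formel-1}). For the converse my first move is to remove the square roots. Since every $a\in\Pi_q$ has a unique square root $\sqrt a\in\Pi_q$ and $a\mapsto a^2$ is a homeomorphism of $\Pi_q$ onto itself, I set $\psi(x):=\tilde\phi(\sqrt x)$, a continuous function on $\Pi_q$ with $\psi(0)=1$. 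Substituting $a=\sqrt x,\ b=\sqrt y$ into (\ref{prod-formel-1}) turns the functional equation into the Cauchy exponential equation on the cone,
\[
\psi(x+y)=\psi(x)\,\psi(y),\qquad x,y\in\Pi_q,
\]
and it suffices to show that its continuous solutions with $\psi(0)=1$ are exactly the maps $x\mapsto\exp(-\langle x,b\rangle)$ with $b\in H_q^{\mathbb C}$; then $\tilde\phi(a)=\psi(a^2)$ gives the claim.

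Next I would linearize this equation, which for complex-valued $\psi$ is the heart of the matter. From $\psi(x)=\psi(x/n)^n$ together with $\psi(x/n)\to\psi(0)=1$ as $n\to\infty$ it follows that $\psi(x)\neq0$ for every $x$, so $\psi$ maps $\Pi_q$ into $\mathbb C\setminus\{0\}$. Because $\Pi_q$ is convex, hence simply connected, the plan is to lift $\psi$ through the covering $\exp$ to a continuous $L:\Pi_q\to\mathbb C$ with $\exp\circ L=\psi$ and $L(0)=0$. Multiplicativity of $\psi$ then forces $L(x+y)-L(x)-L(y)\in 2\pi i\,\mathbb Z$ for all $x,y$; since the left-hand side is continuous on the connected set $\Pi_q\times\Pi_q$ and vanishes at $(0,0)$ (using $L(0)=0$), it is identically $0$, so $L$ is a continuous \emph{additive} function on $\Pi_q$ with $\psi=e^{L}$. (If one prefers to avoid covering-space language, I would instead define $L(x):=n\log\psi(x/n)$ with the principal branch for large $n$, where $\psi(x/n)$ lies near $1$, and check additivity locally near $0$ and independence of $n$.) This passage from the multiplicative to the additive equation while keeping track of the phase of a complex-valued $\psi$ is the step I expect to be the main obstacle.

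It then remains to extend $L$ to a linear functional and identify it. Additivity plus continuity give $L(tx)=tL(x)$ first for $t\in\mathbb Q_{\ge0}$ and, by continuity, for all $t\ge0$. Since $\Pi_q-\Pi_q=H_q$ (every Hermitian matrix is the difference of its positive and negative parts), the prescription $\hat L(u-v):=L(u)-L(v)$ for $u,v\in\Pi_q$ is well defined and extends $L$ to an $\mathbb R$-linear map $\hat L:H_q\to\mathbb C$. Writing $\hat L=\hat L_1+i\hat L_2$ with real-linear $\hat L_j:H_q\to\mathbb R$ and invoking the Riesz representation theorem on the Euclidean space $(H_q,\langle\cdot,\cdot\rangle)$, I obtain $b_1,b_2\in H_q$ with $\hat L_j(x)=-\langle x,b_j\rangle$, which assembles into a unique $b\in H_q^{\mathbb C}$ with $\hat L(x)=-\langle x,b\rangle$ for all $x\in H_q$. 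Hence $\psi(x)=\exp(-\langle x,b\rangle)$ on $\Pi_q$, and $\tilde\phi(a)=\psi(a^2)=\exp(-\langle a^2,b\rangle)=\psi_b(a)$, which finishes the argument.
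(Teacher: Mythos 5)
Your proof is correct, and its skeleton coincides with the paper's: both arguments verify the ``if'' direction by direct substitution, pass to $\psi(x):=\tilde\phi(\sqrt x)$ to turn (\ref{prod-formel-1}) into the multiplicative Cauchy equation $\psi(x+y)=\psi(x)\psi(y)$ on $\Pi_q$, and note that $\psi$ is nowhere zero. The difference lies in how that equation is then solved. The paper first extends the multiplicative function $\psi$ from $\Pi_q$ to all of $H_q$ via the decomposition $a=d-cI_q$ with $d\in\Pi_q$, $c\ge 0$ (setting $\psi(a):=\psi(d)/\psi(cI_q)$), and then simply cites the well-known characterization of continuous exponential solutions on a Euclidean space. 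You instead stay on the cone and prove that characterization in full: a continuous logarithm $L$ with $e^L=\psi$, $L(0)=0$, obtained by lifting through the covering $\exp:\mathbb C\to\mathbb C\setminus\{0\}$ (legitimate, since $\Pi_q$ is convex and hence simply connected); the connectedness argument that kills the $2\pi i\,\mathbb Z$-ambiguity in $L(x+y)-L(x)-L(y)$; homogeneity over $\mathbb Q_{\ge0}$ plus continuity; extension of the additive $L$ to an $\mathbb R$-linear functional on $H_q=\Pi_q-\Pi_q$; and Riesz representation for the real and imaginary parts to produce $b\in H_q^{\mathbb C}$. Thus your extension to $H_q$ happens at the additive level after linearization, whereas the paper extends at the multiplicative level before invoking the classical fact; what your route buys is self-containedness, in particular an honest treatment of the complex phase of $\psi$, which is precisely the delicate point hidden in the paper's citation, at the cost of length. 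One cosmetic remark: when you assemble $b$ from the Riesz vectors $b_1,b_2$, the sign of the imaginary part depends on whether the extended form $\langle\,\cdot\,,\,\cdot\,\rangle$ on $H_q^{\mathbb C}$ is taken conjugate-linear in the second slot (the paper's hermitian convention); either convention parametrizes the same family of functionals $x\mapsto\langle x,b\rangle$ for $x\in H_q$, so the conclusion is unaffected.
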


\begin{proof}
Clearly, all $\psi_b$ satisfy  (\ref{prod-formel-1}).
Conversely, if a function  $\tilde\phi\in C(\Pi_q)$  satisfies
  (\ref{prod-formel-1}), then $\psi(a):=\tilde\phi(\sqrt a)$ satisfies 
$\psi(a)\psi(b)=\psi(a+b)$ for $a,b\in\Pi_q$ and thus, because of 
  $\psi(0)=1$ and continuity, $\psi(a)\ne0$ for all  $a\in\Pi_q$.
Each  $a\in H_q$ can be written as $a=d-cI_q$ for some $d\in \Pi_q     $ and
some 
$c\in[0,\infty[$, and it is easily checked that
 $\psi(a):=\psi(d)/\psi(cI_q)$
 defines a well-defined function $\psi\in C(H_q      )$ with
$\psi(a)\psi(b)=\psi(a+b)$ for $a,b\in H_q^{\mathbb C}      $.
 The assertion now follows by the  well-known
 characterization of the exponential function on a Euclidean space 
by its functional equation. 
\end{proof}

The preceding lemmata immediately lead  to the following characterization of the
 Olshanski spherical  functions of $(G_\infty, K_\infty)$.

\begin{theorem}\label{ident-ol}
A continuous  $K_\infty$-biinvariant function $\phi: G_\infty\to \mathbb C$ is
Olshanski spherical 
 if and only if 
$$\phi(g_a)=\psi_b(a)=\exp\left(-  \langle a^2,b\rangle \right) \quad\quad
(a\in\Pi_q      ) $$
 for some  $b\in H_q^{\mathbb C}      $.
\end{theorem}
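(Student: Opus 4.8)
The plan is to deduce the theorem by chaining the two preceding lemmas, once the normalization of $\phi$ has been pinned down. First I would observe that any nonzero Olshanski spherical function $\phi$ satisfies $\phi(e)=1$, where $e=g_0$ is the identity of $G_\infty$. Indeed, since each $K_p$ is contained in $K_\infty$, the $K_\infty$-biinvariance of $\phi$ makes the right-hand side of the defining product formula (\ref{def-ol-sph}) collapse: taking $g=h=e$ gives $\int_{K_p}\phi(k)\,dk=\phi(e)$ and hence $\phi(e)^2=\phi(e)$, while taking only $h=e$ gives $\int_{K_p}\phi(gk)\,dk=\phi(g)$ and hence $\phi(g)\phi(e)=\phi(g)$ for all $g$. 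Thus $\phi(e)\in\{0,1\}$, and $\phi(e)=0$ forces $\phi\equiv 0$; excluding the trivial function as usual, we have $\phi(e)=1$, so that the associated function $\tilde\phi(b):=\phi(g_b)$ on $\Pi_q$ satisfies $\tilde\phi(0)=1$.

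With this in hand the forward direction is immediate. By Lemma \ref{limit-product-formula}, $\phi$ is Olshanski spherical if and only if $\tilde\phi$ satisfies the functional equation (\ref{prod-formel-1}). Since we have arranged $\tilde\phi(0)=1$, Lemma \ref{sol-conv-eq1} applies and identifies the solutions of (\ref{prod-formel-1}) precisely as the exponentials $\psi_b(a)=\exp(-\langle a^2,b\rangle)$ with $b\in H_q^{\mathbb C}$. Combining the two equivalences yields $\phi(g_a)=\psi_b(a)$ for some $b\in H_q^{\mathbb C}$.

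For the converse I would run the same two lemmas backwards: each $\psi_b$ is continuous on $\Pi_q$, satisfies (\ref{prod-formel-1}) (as already noted in Lemma \ref{sol-conv-eq1}) and has $\psi_b(0)=1$, so via the homeomorphism $F:\Pi_q\to G_\infty/\!/K_\infty$ it defines a continuous $K_\infty$-biinvariant function $\phi$ on $G_\infty$, which is Olshanski spherical by Lemma \ref{limit-product-formula}. Since both lemmas are already established, no analytic difficulty remains; the only step requiring genuine care is the normalization, where one must verify that the product formula truly forces $\phi(e)=1$ and must explicitly discard the identically-zero function, which formally satisfies the definition but is not of the form $\psi_b$. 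I expect this bookkeeping point to be the subtlest—albeit still routine—part of the argument.
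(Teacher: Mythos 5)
Your proof is correct and follows essentially the same route as the paper, which simply chains Lemma \ref{limit-product-formula} with Lemma \ref{sol-conv-eq1} and asserts that the theorem follows immediately from these two lemmata. Your extra care with the normalization $\phi(e)=1$ (equivalently $\tilde\phi(0)=1$, which can also be extracted directly by setting $a=b=0$ in (\ref{prod-formel-1})) fills in exactly the bookkeeping the paper leaves implicit, including the tacit exclusion of the identically zero function.
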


We next investigate which  Olshanski spherical functions of
$(G_\infty,K_\infty)$
appear as limits of spherical functions on $(G_p,K_p)$ for $p\to\infty$.
For this we employ the known convergence of $J_\mu(\mu y)$ to  $e^{-tr(y)}$ for
$\mu\to\infty$:


\begin{lemma}\label{konv-matrix-bessel}
For $y\in  H_q^{\mathbb C}$, \[\lim_{\mu\to\infty}J_\mu(\mu
y)=e^{-tr(y)}\quad\text{ locally uniformly.}\]
On the cone $\Pi_q$, this convergence is even uniform.
More precisely, there
exists a constant $C=C(q,\mathbb F)$ such that
$$|J_\mu(\mu y)-e^{-tr(y)}|\le \,C/\mu
\quad\quad\text{for all }\, y\in\Pi_q      ,\> \mu\ge 2q.$$
\end{lemma}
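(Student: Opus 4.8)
The plan is to work directly from the series definition \eqref{power-j}. Since each zonal polynomial $Z_\lambda$ is homogeneous of degree $|\lambda|$, one has
\[ J_\mu(\mu y)=\sum_{\lambda\ge0}\frac{(-1)^{|\lambda|}}{|\lambda|!}\,w_\lambda(\mu)\,Z_\lambda(y),\qquad w_\lambda(\mu):=\frac{\mu^{|\lambda|}}{(\mu)_\lambda}. \]
The Pochhammer symbol $(\mu)_\lambda$ is a product of $|\lambda|$ factors $\mu-\frac d2(j-1)+i$, all strictly positive once $\mu\ge 2q$ (as $\frac d2(q-1)<2q\le\mu$), and for each fixed $\lambda$ every factor tends to $\mu$, so $w_\lambda(\mu)\to 1$. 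Combining this with the power--trace identity \eqref{power-tr}, which gives $\sum_{|\lambda|=k}Z_\lambda(y)=(tr\,y)^k$, the formal limit of the series is $\sum_k\frac{(-1)^k}{k!}(tr\,y)^k=e^{-tr(y)}$. To turn this into genuine locally uniform convergence I would dominate $Z_\lambda$ on compact subsets of $H_q^{\mathbb C}$ (using that $Z_\lambda=C_\lambda^\alpha$ is a Jack polynomial of degree $|\lambda|$) together with a uniform bound on $w_\lambda(\mu)$, producing a convergent majorant so that dominated convergence applies termwise.

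For the quantitative bound on the whole cone a termwise estimate is bound to fail: on $\Pi_q$ one has $Z_\lambda(y)\ge0$ with $\sum_{|\lambda|=k}Z_\lambda(y)=(tr\,y)^k$, while $|w_\lambda(\mu)-1|$ is of order $|\lambda|^2/\mu$ for small $|\lambda|$, so the absolute series is comparable to $\mu^{-1}(tr\,y)^2e^{tr\,y}$, which is not uniformly small. The uniformity must therefore come from the oscillation of $J_\mu$, and the natural device is an integral representation. For $\mu=pd/2$ identity \eqref{ident-bessel-int}, applied with $y=b^2$ and $x=2\sqrt\mu\,\bigl(\begin{smallmatrix}b\\0\end{smallmatrix}\bigr)$, exhibits $J_\mu(\mu y)$ as the average over $U_p$ of a unimodular exponential $e^{-2i\sqrt\mu\,\Re tr(\,\cdot\, b)}$; for general real $\mu>\gamma-1$ the analogous representation integrates the same exponential against the beta density $\kappa_\mu\,\Delta(I_q-w^*w)^{\mu-\gamma}\,dw$ on $B_q$ appearing in Proposition~\ref{allg-prod-form}. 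In either case $J_\mu(\mu y)$ is the Fourier transform of an explicit probability measure $\nu_\mu$ on $B_q$.

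The decisive point is that under the scaling $w\mapsto w/\sqrt\mu$ the measure $\nu_\mu$ converges to the centered Gaussian $\gamma$ whose Fourier transform is exactly $e^{-tr(y)}$, because $\Delta(I_q-w^*w)^{\mu}=\det(I_q-w^*w)^{\mu}\approx e^{-\mu\,tr(w^*w)}$ concentrates like a Gaussian of covariance $\sim I/\mu$. Writing $J_\mu(\mu y)-e^{-tr(y)}$ as the integral of the modulus-one oscillatory factor against $\nu_\mu-\gamma$, the difference splits into two errors, each $O(1/\mu)$ uniformly in $y\in\Pi_q$: a Fourier (total-variation) distance $\|\nu_\mu-\gamma\|\le C/\mu$, which is independent of $b$ precisely because the integrand has modulus $1$; and a covariance-mismatch term of the form $e^{-tr(y)(1+O(1/\mu))}-e^{-tr(y)}$, which is bounded by $C/\mu$ uniformly thanks to $\sup_{t\ge0}t\,e^{-t}<\infty$. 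Together these yield the stated bound $|J_\mu(\mu y)-e^{-tr(y)}|\le C/\mu$.

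The main obstacle is exactly this global, quantitative Gaussian approximation of $\nu_\mu$ on the matrix ball, made uniform over the noncompact cone; this is where the real work lies, and it matches the type of measure estimate recorded in \cite{V2}. I expect it cleanest to establish the bound first for $\mu=pd/2$, where $\nu_\mu$ is the explicit matrix beta distribution arising from Haar measure on $U_p$ and classical central-limit/Berry--Esseen estimates apply, and then to pass to general real $\mu$ via the ball representation. Care is needed for the degenerate range $2q\le\mu\le d(q-1/2)$ (possible only for $d=4$), where $\mu-\gamma<0$ and the ball density is no longer integrable; there one must either use the renormalized (degenerate) representation alluded to after Proposition~\ref{allg-prod-form} or argue by analytic continuation in $\mu$ from the admissible range.
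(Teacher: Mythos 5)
Your first part coincides with the paper's own argument for the first assertion: the paper proves the locally uniform convergence on $H_q^{\mathbb C}$ precisely by the termwise series estimate, writing $J_\mu(\mu y)-e^{-tr(y)}=\sum_{\lambda\ge0}\frac{1}{|\lambda|!}\bigl(\mu^{|\lambda|}/(\mu)_\lambda-1\bigr)Z_\lambda(-y)$ and invoking the quantitative Pochhammer bound $|1-\mu^{|\lambda|}/(\mu)_\lambda|\le dq\,2^{dq(q-1)/2}|\lambda|^2/\mu$ (Lemma 3.4 of \cite{RV2}) together with the locally uniform convergence of $\sum_\lambda (|\lambda|^2/|\lambda|!)\,Z_\lambda(-y)$; your dominated-convergence variant is the same route, merely without the explicit rate, which the lemma does not claim off the cone. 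For the uniform bound $C/\mu$ on $\Pi_q$, however, the paper gives no proof at all: it simply cites Proposition 3.2 of \cite{RV2} (and Lemma 4.8 of \cite{Ol1} for qualitative uniformity). So your second part is an attempt to reconstruct the cited result. Your diagnosis is correct and worth stating: the termwise majorant is of size $\mu^{-1}\bigl((tr\,y)^2+tr\,y\bigr)e^{tr\,y}$, hence useless globally, and uniformity must come from viewing $J_\mu(\mu y)$ as the Fourier transform of a probability measure (the ball representation you use is exactly the one the paper points to, Eq.\ (3.12) of \cite{R1}, as its alternative route for the first statement) and comparing with a Gaussian in total variation; in rank one this is the classical proof via the Poisson representation, and the concentration of $\kappa_\mu\,\Delta(I-w^*w)^{\mu-\gamma}dw$ is the type of estimate quantified in \cite{V2}.

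There are, however, two genuine gaps in the second part. First, the entire quantitative content, namely the total-variation bound $\|\nu_\mu-\gamma\|\le C/\mu$ after rescaling $w\mapsto w/\sqrt\mu$, is asserted rather than proved; you correctly localize the work there, but as written the proposal replaces the paper's citation by an unproven claim of comparable depth, so it is a plan, not a proof. Second, and more seriously, your patch for the degenerate window is broken. The ball density is integrable only for $\mu>\gamma-1=d(q-1/2)$, so the window $2q\le\mu\le d(q-1/2)$ (nonempty exactly for $d=4$) is not covered, and ``analytic continuation in $\mu$'' cannot close it: $|J_\mu(\mu y)-e^{-tr(y)}|\le C/\mu$ is an inequality, and inequalities do not propagate by analytic continuation -- Carlson-type arguments, which the paper does use (proof of Proposition \ref{proj-beta}), apply to identities between holomorphic functions, not to one-sided bounds. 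The degenerate representation of \cite{R1} exists only at the single endpoint $\mu=d(q-1/2)$. Moreover, on the lattice $\mu=pd/2$ with $q\le p<2q$ the truncated-block measure is singular (writing $u=\bigl(\begin{smallmatrix}w&*\\ c&*\end{smallmatrix}\bigr)$ gives $I_q-w^*w=c^*c$ with $c\in M_{p-q,q}$, so at least $2q-p$ singular values of $w$ equal $1$), and a total-variation comparison with an absolutely continuous Gaussian is then impossible, so your Berry--Esseen step fails there as stated. Those lattice points are harmless for a different reason you do not exploit: the window is a compact $\mu$-interval, so there the claim $C/\mu$ only amounts to a uniform sup-bound, and $|J_{pd/2}(\mu y)|\le1$ holds since it is the Fourier transform of a probability measure by \eqref{ident-bessel-int}. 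What remains genuinely unproven in your outline is the non-lattice case $\mu\in(2q,\,d(q-1/2))$ for $d=4$, $q\ge2$: there you have no integral representation, no hypergroup positivity (these $\mu$ lie below the threshold $d(q-1/2)$ of \cite{R1}), and not even an a priori uniform bound for $J_\mu(\mu\,\cdot)$ on the cone, so your argument does not yield the lemma in the full stated range $\mu\ge 2q$.
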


\begin{proof}
For the second statement we  refer to Proposition 3.2 of  \cite{RV2}; see also 
Lemma 4.8 of \cite{Ol1} or references cited there for the uniform convergence.

The first statement can be obtained either  from the  integral representation
(3.12) of \cite{R1} for $J_\mu$, or 
 by power series expansion as in the proof of 
Proposition 3.5 in \cite{RV2}. We outline the second approach:
Using the expansions (\ref{power-j}) and (\ref{power-tr}),  we have
$$J_\mu(\mu y)-e^{-tr(y)}  = \sum_{\lambda\geq 0} 
\frac{1}{|\lambda|!}\Bigl( \frac{\mu^{|\lambda|}}{(\mu)_\lambda}-1\Bigr)
\cdot Z_\lambda(-y)$$
where by Lemma 3.4 of \cite{RV2}
$$\Bigl|1-\frac{\mu^{|\lambda|}}{(\mu)_\lambda}\Bigr|\le 
dq \cdot 2^{dq(q-1)/2}\cdot\frac{ |\lambda|^2}{\mu}.$$
It is easily checked that the series
$$ \sum_{\lambda\geq 0}\frac{ |\lambda|^2}{|\lambda|!}\cdot Z_\lambda(-y)$$
converges absolutely and locally uniformly for $y\in H_q^{\mathbb C}$
(c.f. Section XV.1 of \cite{FK} and our normalization of the  $Z_\lambda$
instead of the $\Phi_\lambda$
there). This immediately leads to the locally uniform convergence of order
$1/\mu$.
\end{proof}

We conclude from Lemma \ref{konv-matrix-bessel} that for all $y\in\Pi_q$, $b\in
H_q^{\mathbb C}$
and for $\mu=pd/2\to\infty$, the functions $f_s^\mu$ of Section 2 satisfy
$$f_{\mu b}^\mu(y)=J_\mu(\frac{\mu}{4}yby)\to
exp(-tr(yby)/4)=\psi_{b/4}(y)$$
uniformly or locally uniformly depending on  $b$. According to Theorem
\ref{class-spher-1}, the functions
$f_{\mu b}^\mu$  with
$b\in H_q^{\mathbb C}$ form the spherical functions of $(G_p,K_p)$. Considering
biinvariant functions on 
$G_p$ and $G_\infty$ as functions on
 $\Pi_q$ as above, we obtain

\begin{corollary}\label{limit-spher-c1}
All Olshanski spherical functions $\psi_{b}$ of $(G_\infty,K_\infty)$ with $
b\in H_q^{\mathbb C}$ appear as
locally uniform limits of  spherical functions of  $(G_p,K_p)$. 

Moreover,
those  Olshanski spherical functions $\psi_b$  with $b\in \Pi_q $
 even appear as uniform limits of positive definite  spherical functions of
$(G_p,K_p)$ as
$p\to\infty$. 
\end{corollary}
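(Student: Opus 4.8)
The plan is to exhibit each $\psi_b$ directly as a limit of the explicit spherical functions $f_s^{pd/2}$ produced by Theorem \ref{class-spher-1}, by letting the index $s$ grow linearly with $p$ and invoking the scaling asymptotics of Lemma \ref{konv-matrix-bessel}. Throughout I write $\mu=pd/2$, so that $\mu\to\infty$ as $p\to\infty$, and I regard the relevant biinvariant functions as functions on $\Pi_q$ in the manner fixed at the beginning of the section. Fix the target parameter $b$ and set $s_p:=4\mu b=2pd\,b$. By Theorem \ref{class-spher-1} the function $f_{s_p}^{\mu}$ is a spherical function of $(G_p,K_p)$ as soon as $s_p\in H_q^{\mathbb C}$, which holds whenever $b\in H_q^{\mathbb C}$ since $H_q^{\mathbb C}$ is a complex vector space; and it is a positive definite spherical function as soon as $s_p\in\Pi_q$, which holds whenever $b\in\Pi_q$ because $\Pi_q$ is a cone.

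First I would compute the limit pointwise. By the definition \eqref{def-f-s},
\[ f_{s_p}^{\mu}(r)=J_\mu\bigl(\tfrac14\,r\,(4\mu b)\,r\bigr)=J_\mu\bigl(\mu\cdot rbr\bigr),\qquad r\in\Pi_q. \]
Applying Lemma \ref{konv-matrix-bessel} with $y=rbr\in H_q^{\mathbb C}$ then yields
\[ \lim_{p\to\infty}f_{s_p}^{\mu}(r)=e^{-\mathrm{tr}(rbr)}=\exp\bigl(-\langle r^2,b\rangle\bigr)=\psi_b(r), \]
where I used $\mathrm{tr}(rbr)=\mathrm{tr}(r^2b)=\langle r^2,b\rangle$. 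For arbitrary $b\in H_q^{\mathbb C}$ the locally uniform part of Lemma \ref{konv-matrix-bessel} upgrades this to locally uniform convergence in $r$, which proves the first assertion.

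For the refinement I would check that when $b\in\Pi_q$ the argument never leaves the cone: writing $b=b^{1/2}b^{1/2}$ one has $rbr=(b^{1/2}r)^*(b^{1/2}r)\in\Pi_q$ for every $r\in\Pi_q$, so $y=rbr$ ranges only over $\Pi_q$. Hence the uniform estimate $|J_\mu(\mu y)-e^{-\mathrm{tr}(y)}|\le C/\mu$ of Lemma \ref{konv-matrix-bessel} applies uniformly over all such $y$, giving $f_{s_p}^{\mu}\to\psi_b$ uniformly on $\Pi_q$ with rate $O(1/p)$, while each $f_{s_p}^{\mu}$ is positive definite by the observation in the first paragraph. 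Since the genuine analytic content is already carried by Lemma \ref{konv-matrix-bessel}, the only points needing care---and hence the ``main obstacle'', mild as it is---are the bookkeeping that the scaled index $s_p$ stays in $H_q^{\mathbb C}$ (resp.\ in $\Pi_q$), so that the approximating functions genuinely belong to the (positive definite) spherical class, together with the distinction between the uniform regime, available only for $b\in\Pi_q$ where $rbr\in\Pi_q$, and the merely locally uniform regime valid for general complex $b$.
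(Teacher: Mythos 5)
Your proof is correct and takes essentially the same route as the paper: the paper likewise scales the spherical parameter linearly in $\mu$ (it uses $f_{\mu b}^{\mu}$, obtaining the limit $\psi_{b/4}$, so your normalization $s_p=4\mu b$ is a purely cosmetic rescaling), applies Lemma \ref{konv-matrix-bessel} to $J_\mu(\mu\, rbr)$, and invokes Theorem \ref{class-spher-1} for membership in the spherical, respectively positive definite spherical, class. Your explicit verification that $rbr=(b^{1/2}r)^*(b^{1/2}r)\in\Pi_q$ for $b\in\Pi_q$ merely spells out what the paper compresses into ``uniformly or locally uniformly depending on $b$''.
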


We finally determine the  Olshanski spherical functions $\psi_b$ which are
positive definite.

\begin{theorem}\label{pos-def1}
The positive definite Olshanski spherical functions of $(G_\infty,K_\infty)$
are precisely given by the functions  $\psi_b$  with $b\in \Pi_q      $.
\end{theorem}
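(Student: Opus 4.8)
My plan is to prove the two inclusions separately, the substantial one being that $\psi_b$ is positive definite whenever $b\in\Pi_q$. For this I would first reduce to the finite-dimensional pairs by Lemma \ref{posdef-p-infty}: it suffices to show that for every integer $p\ge q$ the function $\psi_b$ corresponds to a positive definite $K_p$-biinvariant function on $G_p$. Under the identification $G_p/\!/K_p\equiv\Pi_q$ this pulls back to the $U_p$-invariant function $x\mapsto \exp(-\langle x^*x,b\rangle)$ on $M_{p,q}$. Writing $b=c^2$ with $c\in\Pi_q$ the positive square root, we have $\langle x^*x,b\rangle=\operatorname{tr}(c\,x^*x\,c)\ge 0$, a positive semidefinite quadratic form in $x$; hence $x\mapsto \exp(-\langle x^*x,b\rangle)$ is a Gaussian on the real vector space $M_{p,q}$ and therefore positive definite on $(M_{p,q},+)$. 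Since it is $U_p$-invariant, its Bochner measure $\nu$ on $\widehat{M_{p,q}}\cong M_{p,q}$ is a finite positive $U_p$-invariant measure, and averaging characters over $U_p$ exactly as in the proof of Theorem \ref{class-spher-1} yields
$$\exp(-\langle x^*x,b\rangle)=\int_{M_{p,q}}\tilde\phi_y(x,I_p)\,d\nu(y).$$
This exhibits $\psi_b$ as a positive superposition of the positive definite Wolf spherical functions $\tilde\phi_y$ with $y\in M_{p,q}$, so $\psi_b$ is positive definite on $G_p$; Lemma \ref{posdef-p-infty} then upgrades this to positive definiteness on $G_\infty$.

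For the converse I would start from Theorem \ref{ident-ol}, according to which every positive definite Olshanski spherical function is $\psi_b$ for some $b\in H_q^{\mathbb C}$, and I must force $b\in\Pi_q$. Write $b=b_1+ib_2$ with $b_1,b_2\in H_q$. As positive definite spherical functions are $\mathbb R$-valued, $\psi_b(ta)=\exp(-t^2\langle a^2,b_1\rangle)\exp(-it^2\langle a^2,b_2\rangle)$ is real for all $a\in\Pi_q$ and $t\ge 0$; the continuous function $t\mapsto t^2\langle a^2,b_2\rangle$ must then take values in $\pi\mathbb Z$ and vanishes at $t=0$, hence vanishes identically, giving $\langle a^2,b_2\rangle=0$ for all $a$. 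Since $\{a^2:a\in\Pi_q\}=\Pi_q$ spans $H_q$, this forces $b_2=0$, i.e.\ $b\in H_q$. Positive definite functions on $G_\infty$ are moreover bounded, so $a\mapsto\exp(-\langle a^2,b\rangle)$ is bounded on $\Pi_q$. Testing with the rank-one matrices $a_s=\sqrt s\,vv^*$ for a unit vector $v$, where $a_s^2=s\,vv^*$, gives $\psi_b(a_s)=\exp(-s\,v^*bv)$, which stays bounded as $s\to\infty$ only if $v^*bv\ge 0$; as $v$ is arbitrary, $b\ge 0$, that is $b\in\Pi_q$.

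I expect the first inclusion to be the main obstacle. The key point is that $\psi_b$ is \emph{not} a spherical function of the finite pairs $(G_p,K_p)$, whose spherical functions are the Bessel functions $f_s^{pd/2}$; consequently Corollary \ref{limit-spher-c1}, which realises $\psi_b$ only as a diagonal limit of positive definite functions living on the varying groups $G_p$, does not by itself give positive definiteness on $G_\infty$. One really has to recognise the Gaussian nature of $\psi_b$ on the abelian part $M_{p,q}$ and pass through the Bochner description of positive definite $K_p$-biinvariant functions before invoking Lemma \ref{posdef-p-infty}. The converse inclusion, by contrast, is routine once the boundedness and real-valuedness of positive definite spherical functions are in hand.
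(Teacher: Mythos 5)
Your proof is correct, and on the substantial inclusion ($b\in\Pi_q$ implies positive definite) it takes a genuinely different route from the paper; however, your closing assessment of what is ``really'' needed is mistaken. The paper does exactly what you claim cannot be made to work: it fixes an integer $p\ge q$, observes that for every $\tilde p\ge p$ the positive definite spherical function $f_b^{\tilde pd/2}$ of $(G_{\tilde p},K_{\tilde p})$ corresponds, by the restriction half of Lemma \ref{posdef-p-infty}, to a positive definite $K_p$-biinvariant function on the \emph{fixed} group $G_p$, and then uses the uniform convergence of Corollary \ref{limit-spher-c1} (i.e.\ Lemma \ref{konv-matrix-bessel}) to realize $\psi_b$ as a pointwise limit of positive definite functions all living on that one group $G_p$; since positive definiteness survives pointwise limits, $\psi_b$ is positive definite on every $G_p$, and the second half of Lemma \ref{posdef-p-infty} then upgrades this to $G_\infty$. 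So the ``diagonal'' nature of the limit is disposed of precisely by the same Lemma \ref{posdef-p-infty} you yourself invoke, and no Gaussian or Bochner analysis is required. Your alternative --- recognizing $x\mapsto\exp(-\langle x^*x,b\rangle)$ as a Gaussian on $(M_{p,q},+)$, hence positive definite there, and averaging its Bochner representation over $U_p$ to exhibit $\psi_b$ as a positive superposition of Wolf's functions $\tilde\phi_y$, $y\in M_{p,q}$ --- is nevertheless valid, with two small remarks: over $\mathbb H$ you should write the quadratic form as $\Re\, tr(x^*xb)=\Re\, tr\bigl((xc)^*(xc)\bigr)\ge0$, since trace cyclicity holds only for the real part; and you do not actually need Wolf's Theorem 5.4 here, because any $U_p$-invariant positive definite $f$ on $(M_{p,q},+)$ directly yields a positive definite biinvariant function on $G_p$, as $\phi\bigl((x_i,k_i)(x_j,k_j)^{-1}\bigr)=f\bigl(k_i^{-1}x_i-k_j^{-1}x_j\bigr)$. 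What your route buys is self-containedness at fixed $p$ (explicit abelian harmonic analysis instead of the asymptotics $J_\mu(\mu y)\to e^{-tr(y)}$); what the paper's route buys is brevity, reusing Theorem \ref{class-spher-1}, Corollary \ref{limit-spher-c1} and Lemma \ref{posdef-p-infty} already at hand --- a scheme that transfers verbatim to Section 5, where no Gaussian structure is available. Your converse direction coincides with the paper's up to cosmetics: the paper likewise deduces $b\in H_q$ from real-valuedness and excludes a negative eigenvalue of $b$ by a rank-one (eigenvector) test against boundedness, exactly as you do.
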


\begin{proof}
Assume that  $\psi_b$ is a positive definite Olshanski spherical function. Then
$\psi_b$
must be $\mathbb R$-valued. From $\psi_b(a)=exp\left(-  \langle a^2,b\rangle
\right)$ for $a\in \Pi_q$ we infer that  $ b\in H_q$.
 Moreover, as $\psi_b$ is in addition  bounded on $\Pi_q$, it follows easily
that  $ b\in \Pi_q $.
In fact, if $ b$ would have a negative eigenvalue with eigenvector $u$,
 we could choose a 
matrix $a\in \Pi_q $ with the same eigenvector $u$ associated to the eigenvalue
1,
and with 
all other eigenvalues equal to 0. It is then clear that 
  $\psi_b(ca)=exp\left(- c^2 \langle a^2,b\rangle
\right)$ tends to
 $\infty$ for $c\to\infty$.

For the converse statement consider  $b\in \Pi_q$.
Let $p\ge q$ be a fixed integer. Then for each integer  $\tilde p\ge p$, the
spherical function $\,f_b^{\tilde pd/2}\in C(\Pi_q)$ corresponds to a $K_{\tilde
p}\,$-biinvariant
positive definite function on $G_{\tilde p}$, and thus by Lemma
\ref{posdef-p-infty}, to a $K_{ p}$-biinvariant
positive definite function on $G_{ p}$.
As positive definiteness is preserved under limits,
it follows from Corollary \ref{limit-spher-c1} that  $\psi_b\in C(\Pi_q)$
is a
$K_p$-biinvariant  positive definite function on $G_{ p}$. This holds for
all $p$, and therefore Lemma \ref{posdef-p-infty} ensures that $\psi_b$
is a
positive definite function on $G_\infty$ as claimed.
\end{proof}

In summary, the Olshanski spherical functions of $(G_\infty,K_\infty)$ admit a
classification which is in complete accordance with that for finite $p$
in Theorem \ref{class-spher-1}.

\section{Olshanski spherical functions related to 
$M_{\infty,q}\rtimes(U_\infty\times U_q)$}

In this section we consider the Gelfand pairs $(G_p,K_p)$ with
 $G_p= M_{p,q}       \rtimes (U_p\times U_q)$
and $K_p:=U_p\times U_q$ for fixed $q\ge1$, where the
groups $U_p$ and  $U_q$ act on $M_{p,q}      $ by multiplication
from the left and right, respectively. Consider  the Olshanski spherical pairs
 $(G_\infty,K_\infty)$ with $G_\infty := \lim_{\rightarrow} G_p$ 
and $K_\infty:= \lim_{\rightarrow} K_p$.

Again, we investigate the 
 Olshanski spherical 
functions of $(G_\infty,K_\infty)$.
We recapitulate first that for each $p,$ the double coset space $G_p/\!/K_p$
 can be topologically identified with the Weyl chamber
$$C_q^B:=\{\xi=(\xi_1,\ldots,\xi_q)\in \mathbb R^q:\> \xi_1\ge \ldots\ge
\xi_q\ge0\}$$
of type $B$ via
$$K_p(x,k)K_p\mapsto\sigma(\sqrt{x^*x}\,) 
\quad\quad\text{for} \quad\quad
x\in M_{p,q}      ,\, k\in K_p$$
independently of $p$ where $\sigma$ stands for the ordered spectrum of a
positive
semidefinite matrix.
 In other words, if for $\xi\in C_q^B$  we consider the diagonal matrix
$\underline \xi:=diag(\xi_1, \ldots, \xi_q)\in M_q      $ as well as
$\begin{pmatrix}
\underline \xi
\\ 0\end{pmatrix}\in M_{p,q}      $,  then
the inverse of this homeomorphism can be written as
$$F_p:C_q^B\to G_p/\!/K_p,\quad b\longmapsto K_pg_\xi K_p 
\quad\quad \text{with} \quad\quad
g_\xi:=\bigl(\begin{pmatrix}  \underline \xi   \\ 0\end{pmatrix},I_p\times
I_q\bigr).$$ 
By definition of the inductive limit topology, the $F_p$ induce a homeomorphism
 $F:C_q^B\to G_\infty/\!/K_\infty.$
Again we use the agreement that for all integers $p$ and for $p=\infty$,
$K_p$-biinvariant continuous functions on
$G_p$ will be identified with   continuous
 functions on $\Pi_q$. When doing so, the statement of Lemma 
\ref{posdef-p-infty} transfers to the present setting without changes.

\medskip

We now turn to the classification of spherical functions:

\begin{proposition}\label{class-spher-2} The spherical functions of $(G_p,K_p),$
considered as functions on  the chamber
$C_q^B,$
are precisely given by
the Dunkl-Bessel functions $\phi_\eta^{pd/2} (\xi):=J_{k(\mu,d)}^B(\xi, i\eta) $
with $\eta\in \mathbb C^q$, where $\mu = pd/2.$ 
Moreover, the positive definite  spherical functions of $(G_p,K_p)$ are given
by the   $\phi_{\eta}^{pd/2}$  with  $\eta\in \mathbb R^q$. 
\end{proposition}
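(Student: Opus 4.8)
The plan is to follow the strategy of Theorem~\ref{class-spher-1} as closely as possible, the only genuinely new ingredient being the extra averaging over $U_q$. Since $(G_p,K_p)=(M_{p,q}\rtimes(U_p\times U_q),\,U_p\times U_q)$ is again a motion-group Gelfand pair of exactly the type treated by Wolf, I would first invoke \cite{W1}, \cite{W2} to record the complete set of spherical functions in the same Fourier-type form as in \eqref{SF_Wolf}: writing $(u',v')\cdot x=u'xv'^{-1}$ for the $K_p$-action, the function
$$\tilde\phi_y(g):=\int_{U_p\times U_q} e^{-i((u',v')\cdot x\,|\,y)}\,du'\,dv'\qquad\bigl(g=(x,\cdot),\ y\in M_{p,q}^{\mathbb C}\bigr)$$
is spherical, these exhaust all spherical functions of $(G_p,K_p)$, and the positive definite ones are precisely those with $y\in M_{p,q}$. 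The task is then to identify this family with the Dunkl-Bessel functions $\phi_\eta^{pd/2}$.

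For the identification I would evaluate $\tilde\phi_y$ on the double coset representative $g_\xi$ and split the integral over $K_p=U_p\times U_q$ into the inner integral over $U_p$ followed by the outer one over $U_q$. For fixed $v'\in U_q$ the matrix part becomes $\begin{pmatrix}\underline\xi\\ 0\end{pmatrix}v'^{-1}$, and the inner $U_p$-integral is exactly the matrix Bessel integral \eqref{ident-bessel-int} (Eq.~(3.4) of \cite{R1}, Propos.~XVI.2.3 of \cite{FK}) in its analytically continued form; as in the derivation of \eqref{ident-bessel-wolf} it produces $f_{y^*y}^{pd/2}(v'\underline\xi v'^{-1})$, where $y^*y\in H_q^{\mathbb C}$ and $x^*y$ denotes the \emph{complex-bilinear} extension used in Section~4. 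Using that $J_{pd/2}$ is invariant under $U_q$-conjugation of its argument together with the symmetry $\phi_s^\mu(r)=\phi_r^\mu(s)$ of Lemma~\ref{properties-phi}, the remaining outer $U_q$-integral becomes exactly the one appearing in the Dunkl characterization \eqref{Dunklchar}. This yields $\tilde\phi_y(g_\xi)=\int_{U_q}f_{v'^{-1}(y^*y)v'}^{pd/2}(\underline\xi)\,dv'=J^B_{k(\mu,d)}(\xi,i\eta)=\phi_\eta^{pd/2}(\xi)$ whenever $y^*y$ has eigenvalues $\eta_1^2,\dots,\eta_q^2$.

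To upgrade this into a clean parametrization I would argue, as the paper does repeatedly, by analytic continuation. The averaged expression $s\mapsto\int_{U_q}f_{v'^{-1}sv'}^{pd/2}(\underline\xi)\,dv'$ is holomorphic and $U_q$-conjugation invariant in $s\in H_q^{\mathbb C}$; on the totally real form $H_q$ every matrix is unitarily diagonalizable, so there it depends only on the eigenvalue multiset of $s$, hence only on the coefficients of the characteristic polynomial of $s$, which are globally defined holomorphic functions on $H_q^{\mathbb C}$. By the identity theorem this dependence persists on all of $H_q^{\mathbb C}$. Consequently every $\tilde\phi_y$ equals some $\phi_\eta^{pd/2}$ (choose $\eta_j$ with $\eta_j^2$ the eigenvalues of $y^*y$), while conversely every $\eta\in\mathbb C^q$ is realized by $y=\begin{pmatrix}\underline\eta\\ 0\end{pmatrix}$, for which $y^*y=\underline\eta^2$ precisely because $*$ is the complex-bilinear extension. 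This establishes the first assertion, with the redundancy in $y$ matching the invariance of $J^B_{k(\mu,d)}$ under the hyperoctahedral group.

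Finally, for the positive definite statement I would specialize to $y\in M_{p,q}$, which by \cite{W1} gives exactly the positive definite spherical functions; then $s=y^*y\in\Pi_q$ has nonnegative eigenvalues, so the associated $\eta$ lies in $\mathbb R^q$, and conversely for $\eta\in\mathbb R^q$ the matrix $y=\begin{pmatrix}\underline\eta\\ 0\end{pmatrix}$ is real, so $\tilde\phi_y=\phi_\eta^{pd/2}$ is positive definite. I expect the main obstacle to be twofold: first, verifying that Wolf's classification transfers verbatim to this pair and pinning down the correct parameter space and its redundancy; and second, the analytic-continuation step showing that the $U_q$-average depends on $y^*y$ only through its eigenvalues — this genuinely requires care, since for $\mathbb F=\mathbb C$ a general element of $H_q^{\mathbb C}$ is not unitarily diagonalizable, so one cannot simply diagonalize $s$ and must instead pass through the real form.
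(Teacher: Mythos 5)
Your proposal is correct, but it takes a genuinely different route from the paper: the paper's own proof of Proposition \ref{class-spher-2} is two citations --- the first statement is quoted from Dunkl theory (Opdam \cite{O}, i.e.\ the known identification of the spherical functions of the flat symmetric space $M_{p,q}\rtimes(U_p\times U_q)/(U_p\times U_q)$ with Dunkl--Bessel functions of type $B$ at the multiplicity $k(\mu,d)$, cf.\ also Section 4 of \cite{R1}), and the second statement is deduced from Theorem 5.4 of \cite{W1}. You instead reprove the first statement from scratch by running the argument of Theorem \ref{class-spher-1} for the larger group $K_p=U_p\times U_q$: Wolf's classification \eqref{SF_Wolf}, the inner $U_p$-integral collapsing via the (analytically continued) matrix Bessel integral to $f^{pd/2}_{v^{-1}(y^*y)v}(\underline\xi)$, and the outer $U_q$-integral matching \eqref{Dunklchar} after the symmetry $\phi_s^\mu(r)=\phi_r^\mu(s)$ of Lemma \ref{properties-phi} and $U_q$-conjugation invariance of $J_\mu$ --- a computation that is correct (note only that \eqref{Dunklchar} is stated for real $\eta$ and must itself be continued analytically in $\eta$, which is harmless since both sides are entire). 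What your approach buys is a self-contained proof that makes the parameter correspondence $y\mapsto\eta$ (with $\eta_j^2$ the eigenvalues of $y^*y$) and its hyperoctahedral redundancy explicit; in particular the positive definite case, which the paper settles by citing Wolf's Theorem 5.4 and tacitly relying on the same identification, becomes transparent ($y\in M_{p,q}$ real iff one may take $\eta\in\mathbb R^q$). The one step you rightly flag as delicate --- that the $U_q$-average depends on $s=y^*y\in H_q^{\mathbb C}$ only through its characteristic polynomial, even though $s$ need not be diagonalizable --- does go through, but it uses the standard fact that a symmetric holomorphic function of $q$ variables factors holomorphically through the elementary symmetric polynomials before the identity theorem can be applied; you could bypass this entirely by expanding $J_\mu$ in zonal polynomials and using $\int_{U_q}Z_\lambda(a\,u\,b\,u^{-1})\,du=Z_\lambda(a)Z_\lambda(b)/Z_\lambda(I_q)$ termwise, which identifies the average directly with the $_0F_1^\alpha$ series for all $s\in H_q^{\mathbb C}$ --- this is essentially how \eqref{Dunklchar} is established in Section 4 of \cite{R1}.
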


\begin{proof}
The first statement is known from Dunkl theory,  see \cite{O}.
 The second then again follows
from Theorem 5.4 of \cite{W1}.
\end{proof}

 For the case $p=\infty$, we start with the following

\begin{lemma}\label{limit-product-formula2}
A continuous  $K_\infty$-biinvariant function $\phi:G_\infty\to \mathbb C$ 
is  Olshanski spherical 
if and only if the continuous function $\tilde\phi(\xi):=\phi(g_\xi)$ on $C_q^B$
 satisfies 
\begin{equation}\label{prod-formel-2}
\tilde\phi(\xi)\cdot \tilde\phi(\eta)
\,=\,  \int_{U_q}
\tilde\phi\bigl(\sigma(\sqrt{\underline \xi^2+ u\underline
\eta^2u^{-1}})\bigr)\>
du,
 \quad\quad \xi,\eta\in C_q^B.
\end{equation}
\end{lemma}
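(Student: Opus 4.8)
The plan is to mirror the argument of Lemma~\ref{limit-product-formula} as closely as possible, the only new feature being the extra right action of $U_q$ on $M_{p,q}$. First I would translate the abstract product formula \eqref{def-ol-sph} into an explicit limit on the chamber $C_q^B$. The key point is to identify the double coset convolution on $G_p/\!/K_p \equiv C_q^B$ coming from the group $K_p = U_p \times U_q$. Here the left $U_p$-averaging produces, exactly as in the previous section, the matrix-cone convolution $*_\mu$ of \eqref{def-convo} with $\mu = pd/2$, while the additional right $U_q$-factor in $K_p$ forces an extra averaging over $U_q$ acting by conjugation on the matrix argument (since $x \mapsto x u$ for $u\in U_q$ changes $x^*x$ into $u^{-1}(x^*x)u$, and passing to ordered spectra means integrating over $U_q$). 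Concretely, writing $a=\underline\xi$ and $b=\underline\eta$ for $\xi,\eta\in C_q^B$, the statement that $\phi$ is Olshanski spherical becomes
\begin{equation*}
\tilde\phi(\xi)\cdot\tilde\phi(\eta) = \lim_{p\to\infty}\frac{1}{\kappa_{pd/2}}\int_{U_q}\int_{B_q}\tilde\phi\Bigl(\sigma\bigl(\sqrt{\underline\xi^2 + u\underline\eta^2 u^{-1} + \underline\xi w u\underline\eta u^{-1} + u\underline\eta u^{-1} w^*\underline\xi}\bigr)\Bigr)\,\Delta(I-w^*w)^{pd/2-\gamma}\,dw\,du.
\end{equation*}
This is the analogue of \eqref{limit-eq1}, now carrying the conjugation-average over $U_q$ inside.

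Next I would take the limit $p\to\infty$ in this expression. As established in the proof of Lemma~\ref{limit-product-formula}, the probability measures $\kappa_{pd/2}^{-1}\,\Delta(I-w^*w)^{pd/2-\gamma}\,dw$ are supported in $B_q$ and converge weakly to $\delta_0$. Since $\tilde\phi$ is continuous and the integrand depends continuously on $w$ uniformly in $u\in U_q$ (the $U_q$-integration runs over a compact group, so one may interchange the limit with $\int_{U_q}du$ by dominated convergence), setting $w=0$ kills the two cross terms. What survives is precisely the right-hand side of \eqref{prod-formel-2}, namely $\int_{U_q}\tilde\phi\bigl(\sigma(\sqrt{\underline\xi^2 + u\underline\eta^2 u^{-1}})\bigr)\,du$. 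This gives both implications at once, since the chain of equivalences $\eqref{def-ol-sph}\Leftrightarrow$(displayed limit formula)$\Leftrightarrow\eqref{prod-formel-2}$ is reversible.

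The main obstacle I anticipate is bookkeeping rather than analysis: one must verify carefully that the double coset convolution on $G_p/\!/K_p$ for the pair with $K_p = U_p\times U_q$ really does decompose as the $*_\mu$-convolution followed by $U_q$-conjugation-averaging, and in particular that the matrix under the square root is exactly $\underline\xi^2 + u\underline\eta^2 u^{-1} + (\text{cross terms in } w)$ with the correct placement of $u$. This requires tracking how $\sigma(\sqrt{x^*x})$ behaves under the combined left/right group action and matching it against \eqref{def-convo}; the interchange of $\lim_{p\to\infty}$ with $\int_{U_q}du$ is routine given compactness and the weak convergence to $\delta_0$. Once the convolution is correctly identified, the passage to the limit is verbatim the argument already used for Lemma~\ref{limit-product-formula}.
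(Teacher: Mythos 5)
Your proposal follows essentially the same route as the paper: the paper likewise rewrites Olshanski sphericality via the finite-$p$ product formula (which it simply cites from p.~771 of \cite{R1}, the explicit form being exactly your displayed limit with $u^*=u^{-1}$) and then lets $p\to\infty$ using the weak convergence of the measures $\kappa_{pd/2}^{-1}\,\Delta(I-w^*w)^{pd/2-\gamma}\,dw$ to $\delta_0$. Your added care about interchanging the limit with the compact $U_q$-integration is a correct fleshing-out of a step the paper leaves implicit, not a different argument.
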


\begin{proof} Let $\phi$ be a continuous  $K_\infty$-biinvariant function on
  $G_\infty$ and $\tilde\phi$ defined as above.
  Then, by (\ref{def-ol-sph}) and the product formula for the spherical
  functions of $(G_p,K_p)$ (see e.g., p. 771 of \cite{R1}),
 $\phi$ is  Olshanski spherical iff $\tilde\phi$
  satisfies
\begin{align}\label{limit-eq3}
\tilde\phi(\xi)\cdot \tilde\phi(\eta)
\,=\,\lim_{p\to\infty} \frac{1}{\kappa_{pd/2}}
\int_{U_q}\int_{B_q}&\tilde\phi\bigl(\sigma(\sqrt{\underline \xi^2+u\underline
\eta^2u^*
+\underline \xi wu\underline \eta u^* +u\underline \eta u^* w^*\underline
\xi})\bigr)\notag \\
&
 \cdot \Delta(I-w^*w)^{pd/2-\gamma}\> dw\> du, \quad \xi, \eta \in C_q^B.
\end{align}
As
the probability measure
$$\,\kappa_{pd/2}^{-1}\cdot\Delta(I-w^*w)^{pd/2-\gamma} \,
dw\,$$ on $B_q$ tends weakly to the point
measure $\delta_0$ for $p\to \infty$, (\ref{limit-eq3}) is
equivalent to
the condition of the lemma.
\end{proof}

We now solve the functional equation (\ref{prod-formel-2}) by using the
Dunkl-Bessel functions $J_k^A$ of type A on the
Weyl chamber
 $$C_q^A:=\{\xi\in\mathbb R^q:\> \xi_1\ge\ldots\ge \xi_q\}\supset C_q^B.$$
For this we identify the space of double cosets of the
Gelfand pairs $(H_q\rtimes U_q, U_q)$ (where $U_q$ acts on $H_q$ by conjugation)
 with $C_q^A$ and recall from Section \ref{Dunkl-Bessel} that
the spherical functions of
$(H_q\rtimes U_q, U_q)$
are given by the functions  $x\mapsto J_{d/2}^A(\xi,\eta)$ with $\eta\in
\mathbb C^q$ by \cite{O}.

\begin{lemma}
A continuous function  $\tilde\phi$ on $C_q^B$ with $\tilde\phi(0)=1$
satisfies 
  (\ref{prod-formel-2}) if and only if there exists some  $b\in \mathbb C^q$
such that
$\tilde\phi(\xi)=J_{d/2}^A(\xi^2,b) $
 for all  $\xi\in C_q^B$. Here $\xi^2\in C_q^B $ means the vector which is
obtained
from
 $\xi$ by taking squares in each component.
\end{lemma}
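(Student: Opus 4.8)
The plan is to reduce the functional equation (\ref{prod-formel-2}) to the multiplicativity of an associated function with respect to the double-coset convolution of the Gelfand pair $(H_q\rtimes U_q,U_q)$, whose spherical functions have already been identified as the $J_{d/2}^A(\,\cdot\,,b)$. The first step is to pass to the function $\Psi(\zeta):=\tilde\phi(\sqrt\zeta)$ on $C_q^B$, where $\sqrt{\cdot}$ is taken componentwise; since $\zeta\mapsto\sqrt\zeta$ is a homeomorphism of $C_q^B$, this is continuous with $\Psi(0)=1$ and $\tilde\phi(\xi)=\Psi(\xi^2)$. As the spectrum of $\sqrt M$ is the componentwise square root of the spectrum of $M$ for $M\in\Pi_q$, the substitution $a=\xi^2$, $b=\eta^2$ turns the integrand $\tilde\phi\bigl(\sigma(\sqrt{\underline\xi^2+u\underline\eta^2u^{-1}})\bigr)$ into $\Psi\bigl(\sigma(\underline a+u\underline b u^{-1})\bigr)$, so that (\ref{prod-formel-2}) becomes equivalent to
\[
\Psi(a)\Psi(b)=\int_{U_q}\Psi\bigl(\sigma(\underline a+u\underline b u^{-1})\bigr)\,du,\qquad a,b\in C_q^B .
\]
This is precisely the statement that $\Psi$ is multiplicative for the restriction to $C_q^B\cong\Pi_q/\!/U_q$ of the double-coset convolution of $(H_q\rtimes U_q,U_q)$.

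The main work is to extend $\Psi$ from $C_q^B$ to all of $C_q^A$. Here I would mimic the argument of Lemma \ref{sol-conv-eq1}: choosing $b=c\mathbf 1$ with $c\ge0$ gives $\underline a+cI_q$, whose spectrum is $a+c\mathbf 1$, whence $\Psi(a)\Psi(c\mathbf 1)=\Psi(a+c\mathbf 1)$; in particular $c\mapsto\Psi(c\mathbf 1)$ is a continuous nonvanishing multiplicative function on $[0,\infty)$ with value $1$ at $0$, hence of the form $e^{-sc}$ for some $s\in\mathbb C$. Writing an arbitrary $\zeta\in C_q^A$ as $\zeta=a-c\mathbf 1$ with $a\in C_q^B$ and $c\ge0$, I would set $\hat\Psi(\zeta):=\Psi(a)\,e^{sc}$; the shift relation makes this well defined and continuous and extends $\Psi$. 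A short computation using $\sigma(\underline a+u\underline b u^{-1}-tI_q)=\sigma(\underline a+u\underline b u^{-1})-t\mathbf 1$ then shows that $\hat\Psi$ obeys the same multiplicativity for all $a,b\in C_q^A$. Consequently $(x,k)\mapsto\hat\Psi(\sigma(x))$ is a normalized continuous $U_q$-biinvariant function on $H_q\rtimes U_q$ satisfying the spherical product formula, i.e.\ a spherical function of $(H_q\rtimes U_q,U_q)$.

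By the classification recalled in Section~\ref{Dunkl-Bessel} (see \cite{O}), these spherical functions are exactly the $J_{d/2}^A(\,\cdot\,,b)$ with $b\in\mathbb C^q$, so $\hat\Psi(\zeta)=J_{d/2}^A(\zeta,b)$ for some $b$, and restriction to $C_q^B$ gives $\tilde\phi(\xi)=\Psi(\xi^2)=J_{d/2}^A(\xi^2,b)$, as claimed. For the converse, given $b\in\mathbb C^q$ the function $J_{d/2}^A(\,\cdot\,,b)$ is spherical for $(H_q\rtimes U_q,U_q)$ and hence satisfies $J_{d/2}^A(\sigma(A),b)\,J_{d/2}^A(\sigma(B),b)=\int_{U_q}J_{d/2}^A(\sigma(A+uBu^{-1}),b)\,du$ for $A,B\in H_q$; specializing $A=\underline{\xi^2}$, $B=\underline{\eta^2}$ and undoing the square-root substitution yields (\ref{prod-formel-2}), while $J_{d/2}^A(0,b)=1$ gives the normalization. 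Alternatively one may verify this product formula directly from the integral representation (\ref{int-rep-a-fkt}).

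The step I expect to be the main obstacle is the extension of $\Psi$ from $C_q^B$ to $C_q^A$ together with the verification that it reproduces precisely the full family with $b\in\mathbb C^q$ (and not merely the bounded, positive-definite members); the square-root bookkeeping and the converse are routine once the product formula of $(H_q\rtimes U_q,U_q)$ and the classification from \cite{O} are in place.
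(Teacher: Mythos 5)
Your proposal is correct and matches the paper's own proof essentially step for step: the same substitution $\psi(\xi):=\tilde\phi(\sqrt\xi)$ reducing (\ref{prod-formel-2}) to the convolution equation (\ref{prod-formel-3}), the same diagonal-shift argument (with $c\mapsto\psi(c\mathbf 1)$ exponential) to extend $\psi$ from $C_q^B$ to $C_q^A$ exactly as in Lemma \ref{sol-conv-eq1}, and the same identification of the extension as a spherical function of the Gelfand pair $(H_q\rtimes U_q,U_q)$, classified via \cite{O} as the Dunkl-Bessel functions $\xi\mapsto J_{d/2}^A(\xi,b)$ with $b\in\mathbb C^q$. The step you flag as the main obstacle, namely the extension and the fact that it yields the full family of parameters $b\in\mathbb C^q$ rather than only the bounded ones, is handled in the paper precisely as you propose (Opdam's classification covers all spherical functions, not just the positive definite ones), so there is nothing missing.
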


\begin{proof} Let  $\tilde\phi\in C(C_q^B)$ with $\tilde \phi(0)=1$. Then
$\tilde\phi$ satisfies
(\ref{prod-formel-2}) if and only if the function $\psi(\xi):=\tilde\phi(\sqrt
\xi)$ on $C_q^B$ satisfies
\begin{equation}\label{prod-formel-3}
\psi(\xi)\cdot \psi(\eta)
\,=\,  \int_{U_q}
\psi(\sigma(\underline \xi+ u\underline \eta u^{-1}))\> du, 
 \quad\quad \xi, \eta\in C_q^B.
\end{equation}
In particular, for $a,b\in[0,\infty[$ 
    we have 
$$\psi((a,\ldots,a))\psi((b,\ldots,b))=\psi((a+b,\ldots,a+b)).$$
  As $\psi$ is  continuous with $\psi(0)=1,$ this implies that there exists some
$c\in \mathbb C$
such that 
\[\psi((a,\ldots,a))=e^{ca} \,\,\text{ for all }  a\in[0,\infty[.\] 
Precisely as in the proof or Lemma \ref{sol-conv-eq1}, it is now seen that
 $\psi$ can be uniquely extended from $C_q^B$ to a continuous function on
$C_q^A$ which
satisfies \eqref{prod-formel-3} for all $\xi,\eta\in
C_q^A$, namely by putting
\[\psi(\xi_1,\ldots,\xi_q):=\psi(\xi_1-\xi_q,\ldots, \xi_{q-1}-\xi_q,0)\cdot
e^{c\xi_q}
\quad\text{for }\, (\xi_1,\ldots,\xi_q)\in C_q^A.\]
Thus the extension  $\psi\in  C(C_q^A)$ is a spherical
function of the Gelfand pair $(H_q      \rtimes U_q,  U_q).$ On the other hand,
we know that the spherical 
functions of this Gelfand pair  are precisely 
  the
Dunkl-Bessel functions  $\xi\mapsto J_{d/2}^A(\xi,b)$ with $b\in 
\mathbb C^q$.
This proves the claim.
\end{proof}

The preceding lemmata yield the following characterization of the
 Olshanski spherical  functions.

\begin{theorem}\label{ident-ol2}
A continuous  $K_\infty$-biinvariant function $\phi: G_\infty\to \mathbb C$ is
Olshanski spherical 
 if and only if for some  $b\in\mathbb C^q$,
\[\phi(g_\xi)=J_{d/2}^A(\xi^2,-b) \, =: \psi_b(\xi)\,\,\, \text{ for all
}\,\xi\in C_q^B.\]
\end{theorem}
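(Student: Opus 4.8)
The plan is to read off Theorem \ref{ident-ol2} as the composition of the two preceding lemmata, exactly paralleling the proof of its analogue Theorem \ref{ident-ol} in the previous section. The starting point is the identification, legitimate because $F\colon C_q^B\to G_\infty/\!/K_\infty$ is a homeomorphism, of a continuous $K_\infty$-biinvariant function $\phi$ on $G_\infty$ with the continuous function $\tilde\phi(\xi):=\phi(g_\xi)$ on $C_q^B$. By Lemma \ref{limit-product-formula2}, $\phi$ is Olshanski spherical exactly when $\tilde\phi$ solves the product formula \eqref{prod-formel-2}; and by the lemma immediately above, those solutions subject to $\tilde\phi(0)=1$ are precisely the functions $\xi\mapsto J_{d/2}^A(\xi^2,b)$ with $b\in\mathbb C^q$. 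Relabelling $b$ as $-b$, which merely reindexes the same family and installs the sign convention of the stated $\psi_b$ (matching Theorem \ref{ident-ol} and anticipating the $p\to\infty$ limit of the type $B$ functions $\phi_\eta^{pd/2}$), would then yield the claimed description $\phi(g_\xi)=\psi_b(\xi)=J_{d/2}^A(\xi^2,-b)$.

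The only gap between the two lemmata, and the one point that needs care, is the normalization $\tilde\phi(0)=1$: Lemma \ref{limit-product-formula2} does not presuppose it, while the lemma above does. First I would dispose of it on the spherical side. Putting $\xi=\eta=0$ in \eqref{prod-formel-2} gives $\tilde\phi(0)^2=\tilde\phi(0)$, so $\tilde\phi(0)\in\{0,1\}$; and setting $\eta=0$, where $\sigma(\sqrt{\underline\xi^2})=\xi$, yields $\tilde\phi(\xi)\tilde\phi(0)=\tilde\phi(\xi)$, so that $\tilde\phi(0)=0$ would force $\tilde\phi\equiv0$. Hence every nonzero Olshanski spherical function automatically satisfies $\tilde\phi(0)=\phi(e)=1$, and the lemma above applies. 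For the converse I would note that each candidate has $\psi_b(0)=J_{d/2}^A(0,-b)=1$, since only the term $\lambda=0$ survives at the argument $0$ in the defining series of $J_{d/2}^A$; the normalization hypothesis is therefore met, the lemma above produces \eqref{prod-formel-2}, and Lemma \ref{limit-product-formula2} returns that $\phi$ is Olshanski spherical.

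I do not expect a genuine obstacle, since all the analytic substance already resides in the two lemmata: the weak convergence of the densities $\kappa_{pd/2}^{-1}\Delta(I-w^*w)^{pd/2-\gamma}\,dw$ to $\delta_0$ behind Lemma \ref{limit-product-formula2}, and the reduction to the spherical functions of $(H_q\rtimes U_q,U_q)$ behind the lemma above. The remaining work is bookkeeping of hypotheses, and the subtle point is precisely the normalization above. One must observe that the continuous $K_\infty$-biinvariant solutions of \eqref{def-ol-sph} split into the zero function and the normalized functions with $\phi(e)=1$, so that the implicit restriction to nonzero $\phi$ discards no spherical function of interest and makes the asserted equivalence clean in both directions.
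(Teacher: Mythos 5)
Your proposal is correct and follows essentially the same route as the paper, which derives Theorem \ref{ident-ol2} with no further argument as the immediate composition of Lemma \ref{limit-product-formula2} and the lemma solving the functional equation \eqref{prod-formel-2}, exactly as you do (including the harmless relabelling $b\mapsto -b$). Your extra care with the normalization $\tilde\phi(0)=1$ --- showing via \eqref{prod-formel-2} that $\tilde\phi(0)\in\{0,1\}$ and that $\tilde\phi(0)=0$ forces $\tilde\phi\equiv 0$, so only the tacitly excluded zero function is discarded --- is sound bookkeeping that the paper leaves implicit, not a different approach.
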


We next study  which  Olshanski spherical functions of $(G_\infty,K_\infty)$
appear as limits of spherical functions on $(G_p,K_p)$ for $p\to\infty$.
For this we employ the known convergence of  the Dunkl-Bessel functions of
type B to those of type A from \cite{RV1}:

\begin{lemma}\label{konv-matrix-bessel2}
For each  $b\in\mathbb C^q $, 
$$\lim_{\mu\to\infty}J_{k(\mu,d)}^B(2\sqrt{\mu} \xi, i
b)=J_{d/2}^A(\xi^2,-b^2)$$
 locally uniformly in $\xi\in C_q^B$. For $(\xi,b)\in C_q^B\times C_q^B,$ the 
convergence is even uniform.
\end{lemma}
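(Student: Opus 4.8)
The plan is to avoid a direct term-by-term comparison of the $_0F_1$- and $_0F_0$-series and instead reduce everything to the matrix Bessel convergence already established in Lemma \ref{konv-matrix-bessel}, using the two integral representations \eqref{Dunklchar} and \eqref{int-rep-a-fkt}. First I would substitute the first argument $2\sqrt\mu\,\xi$ and $\eta=b$ into \eqref{Dunklchar} — which replaces the matrix $\underline\xi^2$ occurring there by $4\mu\,\underline\xi^2$ — and use $\tfrac14\cdot 4\mu=\mu$ to obtain, for $\xi,b\in C_q^B$,
\[
J_{k(\mu,d)}^B(2\sqrt\mu\,\xi,ib)=\int_{U_q}J_\mu\bigl(\mu\,\underline b\,u\,\underline\xi^2 u^{-1}\underline b\bigr)\,du .
\]
The key observation is that for real $\xi,b\in C_q^B$ the integrand argument
\[
y_u:=\underline b\,u\,\underline\xi^2 u^{-1}\underline b=(\underline b\,u\,\underline\xi)(\underline b\,u\,\underline\xi)^*
\]
lies in $\Pi_q$ for every $u\in U_q$, since $\underline\xi,\underline b$ are real diagonal (hence Hermitian) and $u^{-1}=u^*$.

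Next I would invoke the uniform estimate of Lemma \ref{konv-matrix-bessel}: because $y_u\in\Pi_q$, we have $|J_\mu(\mu y_u)-e^{-tr(y_u)}|\le C/\mu$ for all $u\in U_q$, all $\xi,b\in C_q^B$, and all $\mu\ge 2q$, with a constant $C=C(q,\mathbb F)$. Integrating this bound over the compact group $U_q$ yields
\[
\Bigl|\,J_{k(\mu,d)}^B(2\sqrt\mu\,\xi,ib)-\int_{U_q}e^{-tr(y_u)}\,du\,\Bigr|\le \frac{C}{\mu}
\]
uniformly in $(\xi,b)\in C_q^B\times C_q^B$. By cyclicity of the trace, $tr(y_u)=tr(\underline{b^2}\,u\,\underline{\xi^2}u^{-1})$, so the integral representation \eqref{int-rep-a-fkt}, applied with arguments $\xi^2$ and $-b^2$, identifies $\int_{U_q}e^{-tr(y_u)}\,du=J_{d/2}^A(\xi^2,-b^2)$. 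This gives the uniform convergence on $C_q^B\times C_q^B$ with rate $O(1/\mu)$.

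For the locally uniform convergence in $b\in\mathbb C^q$, I would extend the first displayed identity by analytic continuation: both sides are entire in $b$ (the right-hand side because $y_u$ depends polynomially on $b$ and $J_\mu$ is holomorphic on $H_q^{\mathbb C}$), and they agree on the chamber $C_q^B$, which has nonempty interior in $\mathbb R^q$. For $\xi$ and $b$ ranging over compact sets the matrices $y_u$ stay in a compact subset of $H_q^{\mathbb C}$, so the locally uniform convergence $J_\mu(\mu\,\cdot)\to e^{-tr(\cdot)}$ of Lemma \ref{konv-matrix-bessel}, together with the analogous analytic extension of \eqref{int-rep-a-fkt}, yields the locally uniform limit $J_{d/2}^A(\xi^2,-b^2)$.

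The main obstacle is the uniform (rather than merely locally uniform) convergence on the \emph{unbounded} chamber $C_q^B\times C_q^B$: a naive comparison of the defining series only produces local uniformity and in fact a bound that diverges as $\|\xi\|\to\infty$. The route above circumvents this precisely because the reduction to $J_\mu(\mu y_u)$ with $y_u\in\Pi_q$ lets one invoke the global $O(1/\mu)$ bound on the cone from Lemma \ref{konv-matrix-bessel} (equivalently \cite{RV2}), and the passage through the compact integral over $U_q$ then preserves uniformity. Alternatively, this is exactly the type B to type A limit recorded in \cite{RV1}.
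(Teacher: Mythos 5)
Your proof is correct and follows essentially the paper's own route: the paper quotes the limit from \cite{RV1} but immediately remarks that the result ``also follows immediately from Lemma \ref{konv-matrix-bessel} by taking the means \eqref{Dunklchar} and \eqref{int-rep-a-fkt}'', which is precisely the averaging argument you carry out in detail. Your fleshed-out version (rescaling \eqref{Dunklchar} so the integrand becomes $J_\mu(\mu y_u)$ with $y_u\in\Pi_q$, invoking the uniform $O(1/\mu)$ bound on the cone, and extending to $b\in\mathbb C^q$ by analytic continuation) is a faithful and correct elaboration of exactly that sketch.
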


We mention that this result also follows immediately
from Lemma \ref{konv-matrix-bessel} by taking  the means (\ref{Dunklchar}) and
 (\ref{int-rep-a-fkt}) for
the Dunkl-Bessel functions of type $B$ and A respectively. 
Lemma \ref{konv-matrix-bessel2} in combination with Proposition
\ref{class-spher-2} and Theorem
\ref{ident-ol2} implies the following result (again, we consider biinvariant
functions on 
$G_p$ and $G_\infty$ as functions on
 $C_q^B$):

\begin{theorem}\label{limit-spher-c3}
All Olshanski spherical functions $\psi_b\,,\, b \in \mathbb C^q$ of
$(G_\infty,K_\infty)$
appear as locally uniform limits of  spherical functions of  $(G_p,K_p)$ for
$p\to\infty$.

Moreover, those Olshanski spherical functions $\psi_b$ with $b\in
C_q^B$
appear  even as
uniform limits of
the  spherical functions of  $(G_p,K_p)$ for $p\to\infty$. 
\end{theorem}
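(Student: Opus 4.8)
The plan is to deduce the statement by combining three ingredients already at hand: the classification of the Olshanski spherical functions in Theorem~\ref{ident-ol2}, which gives $\psi_b(\xi)=J_{d/2}^A(\xi^2,-b)$; the description of the finite-$p$ spherical functions in Proposition~\ref{class-spher-2}, namely $\phi_\eta^{pd/2}(\xi)=J_{k(\mu,d)}^B(\xi,i\eta)$ with $\mu=pd/2$; and the type-$B$-to-type-$A$ convergence in Lemma~\ref{konv-matrix-bessel2}. Writing $\mu=pd/2$, the goal is to produce, for each $b\in\mathbb C^q$, a sequence of spherical functions $\phi_{\eta_p}^{pd/2}$ with $\phi_{\eta_p}^{pd/2}\to\psi_b$ as $p\to\infty$. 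The point that must be handled is that in Lemma~\ref{konv-matrix-bessel2} it is the \emph{first} argument that carries the rescaling $2\sqrt\mu\,\xi$, whereas in $\phi_\eta^{pd/2}(\xi)=J_{k(\mu,d)}^B(\xi,i\eta)$ the variable $\xi$ runs over the fixed chamber $C_q^B$ and only the parameter $\eta$ is free. The resolution is a homogeneity identity that transfers the rescaling from the argument to the parameter.

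First I would record this homogeneity. Since $J_k^B(\xi,\eta)={}_0F_1^\alpha(\mu;\xi^2/2,\eta^2/2)$ and the Jack polynomials $C_\lambda^\alpha$ are homogeneous of degree $|\lambda|$, replacing $\xi$ by $s\xi$ multiplies the $\lambda$-term of the series by $s^{|\lambda|}$, exactly as replacing $\eta$ by $s\eta$ does; hence for every scalar $s$,
\[ J_{k(\mu,d)}^B(s\xi,\eta)=J_{k(\mu,d)}^B(\xi,s\eta). \]
Taking $s=2\sqrt\mu$ and writing $\eta=2\sqrt\mu\,c$ gives
\[ \phi_{2\sqrt\mu\,c}^{pd/2}(\xi)=J_{k(\mu,d)}^B\bigl(\xi,\,2\sqrt\mu\,(ic)\bigr)=J_{k(\mu,d)}^B\bigl(2\sqrt\mu\,\xi,\,ic\bigr), \]
so the rescaled Bessel functions of Lemma~\ref{konv-matrix-bessel2} are \emph{genuine} spherical functions of $(G_p,K_p)$ with parameter $\eta_p:=2\sqrt\mu\,c\in\mathbb C^q$. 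Given $b\in\mathbb C^q$, I then choose a componentwise square root $c\in\mathbb C^q$ with $c^2=b$, which exists since $\mathbb C$ is algebraically closed.

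With these ingredients the first assertion follows at once: by Lemma~\ref{konv-matrix-bessel2},
\[ \phi_{2\sqrt\mu\,c}^{pd/2}(\xi)=J_{k(\mu,d)}^B(2\sqrt\mu\,\xi,ic)\longrightarrow J_{d/2}^A(\xi^2,-c^2)=J_{d/2}^A(\xi^2,-b)=\psi_b(\xi) \]
locally uniformly in $\xi\in C_q^B$, exhibiting $\psi_b$ as a locally uniform limit of spherical functions of $(G_p,K_p)$. For the second assertion, if $b\in C_q^B$ I take $c=\sqrt b$ to be the componentwise nonnegative square root; since $t\mapsto\sqrt t$ is increasing on $[0,\infty)$, the entries of $c$ are again nonnegative and nonincreasing, so $c\in C_q^B$. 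Then $(\xi,c)\in C_q^B\times C_q^B$, and the uniform part of Lemma~\ref{konv-matrix-bessel2} upgrades the convergence above to uniform convergence on $C_q^B$.

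The step that carries the actual content is the homogeneity identity together with the observation that it converts the argument-rescaling of Lemma~\ref{konv-matrix-bessel2} into a parameter-rescaling; this is precisely what makes the approximants honest spherical functions $\phi_{\eta_p}^{pd/2}$ in the sense of Proposition~\ref{class-spher-2}, rather than ad hoc rescalings of them. The remaining points, namely the existence of a complex square root in general and the fact that the real nonnegative square root preserves the chamber $C_q^B$, are routine.
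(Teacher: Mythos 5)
Your proof is correct and follows exactly the route the paper intends: it combines Lemma \ref{konv-matrix-bessel2}, Proposition \ref{class-spher-2} and Theorem \ref{ident-ol2}, which is all the paper itself says in lieu of a written-out proof. The homogeneity identity $J_{k(\mu,d)}^B(s\xi,\eta)=J_{k(\mu,d)}^B(\xi,s\eta)$ and the componentwise square root $c^2=b$ (with $\sqrt{b}\in C_q^B$ for $b\in C_q^B$) are precisely the routine details the paper leaves implicit, and you have supplied them accurately.
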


We finally turn to the question which of the Olshanski spherical functions
$\psi_b$
are positive definite. The same argument as in the second part 
of the proof of Theorem \ref{pos-def1} immediately implies:

\begin{proposition}  The functions  $\psi_b$  with $b\in  C_q^B$ are
 positive definite Olshanski spherical functions of $(G_\infty,K_\infty)$.
\end{proposition}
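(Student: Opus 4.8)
The plan is to reproduce the second part of the proof of Theorem \ref{pos-def1}, with the cone $\Pi_q$ replaced by the chamber $C_q^B$ and the functions $f_s^{pd/2}$ replaced by the type-$B$ Dunkl--Bessel functions of Proposition \ref{class-spher-2}. Fix $b\in C_q^B$ and let $c:=\sqrt b\in C_q^B$ be its componentwise nonnegative square root, which exists precisely because $b\in C_q^B$; then $c^2=b$, so $J_{d/2}^A(\xi^2,-c^2)=\psi_{c^2}(\xi)=\psi_b(\xi)$ in the notation of Theorem \ref{ident-ol2}. The key point is that for every finite $p$ with $\mu=pd/2$ the functions occurring in the limit of Lemma \ref{konv-matrix-bessel2} are already positive definite spherical functions of $(G_p,K_p)$. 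Indeed, by the joint homogeneity of the series defining $J_k^B$ in its two arguments,
\[
J_{k(\mu,d)}^B\bigl(2\sqrt{\mu}\,\xi,\,ic\bigr)=\phi_{2\sqrt{\mu}\,c}^{pd/2}(\xi),
\]
so this function carries the real parameter $2\sqrt{\mu}\,c\in\mathbb R^q$ and is therefore positive definite by Proposition \ref{class-spher-2}.

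With this identification the remainder is formal. I would fix an integer $p\ge q$ and observe that for every integer $\tilde p\ge p$, writing $\tilde\mu=\tilde p d/2$, the function $\phi_{2\sqrt{\tilde\mu}\,c}^{\tilde p d/2}\in C(C_q^B)$ corresponds to a $K_{\tilde p}$-biinvariant positive definite function on $G_{\tilde p}$, and hence, by the version of Lemma \ref{posdef-p-infty} valid in the present setting, also to a $K_p$-biinvariant positive definite function on $G_p$. Letting $\tilde p\to\infty$, Lemma \ref{konv-matrix-bessel2} (equivalently, Theorem \ref{limit-spher-c3}) shows that these functions converge uniformly on $C_q^B$ to $\psi_b$. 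Since positive definiteness is preserved under pointwise limits, $\psi_b$ is a $K_p$-biinvariant positive definite function on $G_p$. As $p\ge q$ was arbitrary, the equivalence in Lemma \ref{posdef-p-infty} then yields that $\psi_b$ is positive definite on $G_\infty$, which is the claim.

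The one step that is not purely bookkeeping is the displayed identity, i.e.\ recognizing the rescaled type-$B$ Bessel function of Lemma \ref{konv-matrix-bessel2} as the spherical function $\phi_{2\sqrt{\mu}\,c}^{pd/2}$ with a real index. If one prefers to avoid this computation, the same conclusion can be reached by noting that $\xi\mapsto\phi_c^{pd/2}(2\sqrt{\mu}\,\xi)$ is obtained from the positive definite spherical function $\phi_c^{pd/2}$ by composing with the dilation $(x,k)\mapsto(2\sqrt{\mu}\,x,k)$, which is an automorphism of $G_p$ and hence preserves positive definiteness. Either way, once the finite-$p$ functions are seen to be positive definite, the passage through $G_{\tilde p}$, $G_p$ and $G_\infty$ is identical to the finite-rank argument already used for Theorem \ref{pos-def1}.
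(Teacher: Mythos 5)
Your proposal is correct and follows essentially the same route as the paper, whose proof consists precisely of the remark that the second part of the proof of Theorem \ref{pos-def1} carries over: approximate $\psi_b$ uniformly by positive definite spherical functions of $(G_{\tilde p},K_{\tilde p})$, descend to each $G_p$ via the transferred Lemma \ref{posdef-p-infty}, pass to the limit, and apply that lemma again to reach $G_\infty$. Your explicit verification via homogeneity that $J_{k(\mu,d)}^B(2\sqrt{\mu}\,\xi,ic)=\phi_{2\sqrt{\mu}\,c}^{pd/2}(\xi)$ with real parameter $2\sqrt{\mu}\,c\in\mathbb R^q$ (so that Proposition \ref{class-spher-2} gives positive definiteness) is exactly the detail the paper leaves implicit, and it is correctly carried out.
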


We conjecture that  the converse statement holds as well. 
For $q=1$ we are   in fact
in the situation of Theorem \ref{pos-def1}, and the conjecture is clear.

For $q\ge2$ however,  there exist 
 spherical functions $\psi_b$  with $b\in  \mathbb C^q\setminus\mathbb R^q$
which are
$\mathbb R$-valued and bounded, and at present, we are unable to decide whether
these
functions are 
positive definite on $G_\infty$.

We give an example for $q=2$ and $\mathbb F=\mathbb R$:
Let $b=(i,-i)$ and $\xi=(\xi_1,\xi_2)\in C_2^B$, i.e. $\,\xi_1\ge \xi_2\ge0$ .
Then in view of
 integral representation  (\ref{int-rep-a-fkt}),
\begin{align}
\psi_b(\xi)&=\int_{O(2)} \exp\Bigl(-i \cdot tr\bigl(\underline \xi^2 \cdot u 
\begin{pmatrix} 1&0\\0&-1\end{pmatrix} u^{-1}\bigr)\Bigr) du\notag\\&
=\,\frac{1}{2\pi}\int_{-\pi}^{\pi} 
 \exp\bigl( -i(\xi_1^2-\xi_2^2)(\cos^2 t-\sin^2 t)\bigr) dt
\notag\\
& =\,\frac{1}{2\pi}\int_{-\pi}^{\pi} 
 \cos\left((\xi_1^2-\xi_2^2)(\cos^2 t-\sin^2 t)\right)dt.
\notag
\end{align}
It is unclear to us whether this function, which is a Bessel function, is
positive definite on
$G_\infty$, i.e., whether according to the version of Lemma \ref{posdef-p-infty}
in the
context of this section, this function is positive definite on $G_p$ for all
$p\ge q$.

\section{Positive integral representations of  matrix Bessel functions}

Consider integers $p_2\ge p_1\ge q$ and the associated indices $\mu_k:=p_kd/2$
($k=1,2$) of the matrix Bessel functions. Then  the functions
$\phi_s^{\mu_2}$ with $s\in\Pi_q$ as introduced in \eqref{def-phi-s}  represent
positive definite
biinvariant functions on $G_{p_1}$, and thus
by  Lemma \ref{posdef-p-infty}, positive definite biinvariant
functions on $G_{p_2}$. Therefore, by Bochner's theorem for hypergroups (see
\cite{J}), which
may be applied to  the associated matrix Bessel
hypergroup on $\Pi_q$ of index $\mu_1$, the function $\phi_s^{\mu_2}$ has a
representation
\begin{equation}\label{bochner_1}\phi_s^{\mu_2}(x)=\int_{\Pi_q}
\phi_t^{\mu_1}(x)\> d\nu_{p_1,p_2;s}(t)
\quad\quad(x\in\Pi_q)\end{equation}
with some unique probability measure $\nu_{p_1,p_2;s}\in M^1(\Pi_q)$.

In this section we shall determine these measures explicitely in two different
ways. Comparison of these results will then
lead to a projection result for multivariate beta distributions.
Our first approach was already carried out in \cite{H} for $\mathbb F=\mathbb
R$; it
relies on the following Laplace transform identity for the 
Bessel functions $ J_\mu$ which holds for general $\mathbb F$; see Proposition
XV.2.1 and Corollary VII.1.3
of \cite{FK}:

\begin{proposition}\label{laplace-trafo}
For  all $\mu\in\mathbb C$ with $\Re\mu>d(q-1)/2$ and $y\in\Omega_q$,
$$\int_{\Pi_q} J_\mu(x)e^{-\langle x,y\rangle} \Delta(x)^{\mu-n/q}\> dx 
= \Gamma_\Omega^q(\mu)\cdot \Delta(y)^{-\mu} \cdot e^{-tr(y^{-1})}$$
with the  Gamma function
$$\Gamma_\Omega^q(\mu)=\int_{\Pi_q}e^{-tr(x)} \Delta(x)^{\mu-n/q}\> dx =
(2\pi)^{(n-q)/2}
\Gamma(\mu)\Gamma(\mu-d/2)\cdots\Gamma(\mu-(q-1)d/2).$$
\end{proposition}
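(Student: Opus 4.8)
The plan is to prove the identity by a termwise Laplace transform of the defining series (\ref{power-j}) of $J_\mu$. First I would fix $y\in\Omega_q$ and record that the complex measure $e^{-\langle x,y\rangle}\Delta(x)^{\mu-n/q}\,dx$ is finite on $\Omega_q$ exactly when $\Re\mu>d(q-1)/2$; this is the same condition that makes $\Gamma_\Omega^q(\mu)$ finite, since near $\partial\Pi_q$ the behaviour is governed by the minors $\Delta_i$. Because $y$ is strictly positive definite there is $\varepsilon>0$ with $\langle x,y\rangle\ge\varepsilon\,tr(x)$ on $\Pi_q$, so the exponential decay of $e^{-\langle x,y\rangle}$ dominates the sub-exponential growth of the integrand and legitimates the manipulations below.

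The heart of the matter is the Laplace transform of a single zonal polynomial,
\begin{equation}
\int_{\Omega_q} e^{-\langle x,y\rangle}\,Z_\lambda(x)\,\Delta(x)^{\mu-n/q}\,dx
\;=\;\Gamma_\Omega^q(\mu)\,(\mu)_\lambda\,\Delta(y)^{-\mu}\,Z_\lambda(y^{-1}),
\tag{$\star$}
\end{equation}
which I would derive from Gindikin's Laplace formula for the generalized power functions $\Delta_{\mathbf s}$ (see \cite{FK}, Ch.~VII). Writing $Z_\lambda=c_\lambda\Phi_\lambda$ with $\Phi_\lambda(x)=\int_{U_q}\Delta_\lambda(uxu^*)\,du$ and unfolding the average, the substitution $x\mapsto u^*xu$ (under which $\Delta$, $tr$ and $dx$ are invariant while $\langle x,y\rangle\mapsto\langle x,uyu^*\rangle$) reduces the inner integral to Gindikin's formula with shifted index $\mathbf s=\lambda+\mu\mathbf 1$. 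Using $\Delta_\lambda(x)\Delta(x)^\mu=\Delta_{\lambda+\mu\mathbf 1}(x)$, the identity $\Gamma_\Omega^q(\lambda+\mu\mathbf 1)=(\mu)_\lambda\,\Gamma_\Omega^q(\mu)$ (a consequence of $\Gamma(\mu-\tfrac{d}{2}(j-1)+\lambda_j)=(\mu-\tfrac{d}{2}(j-1))_{\lambda_j}\,\Gamma(\mu-\tfrac{d}{2}(j-1))$ together with the definition of $(\mu)_\lambda$), and $\Delta_{\lambda+\mu\mathbf 1}((uyu^*)^{-1})=\Delta_\lambda(uy^{-1}u^*)\Delta(y)^{-\mu}$, a re-averaging over $u$ turns $\int_{U_q}\Delta_\lambda(uy^{-1}u^*)\,du$ back into $\Phi_\lambda(y^{-1})$ and yields $(\star)$ after multiplying by $c_\lambda$. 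The scalar case $q=1$ gives a convenient check of the constant $(\mu)_\lambda$.

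With $(\star)$ in hand the proof closes by resummation. Inserting (\ref{power-j}) and integrating term by term,
\[
\int_{\Pi_q}J_\mu(x)e^{-\langle x,y\rangle}\Delta(x)^{\mu-n/q}\,dx
=\Gamma_\Omega^q(\mu)\,\Delta(y)^{-\mu}\sum_{k\ge0}\frac{(-1)^k}{k!}\sum_{|\lambda|=k}Z_\lambda(y^{-1}),
\]
and the inner sum equals $(tr\,y^{-1})^k$ by (\ref{power-tr}), so the series collapses to $e^{-tr(y^{-1})}$. The product expansion of $\Gamma_\Omega^q(\mu)$ into ordinary Gamma factors is the classical Gindikin evaluation (\cite{FK}, Cor.~VII.1.3), obtained by induction on $q$ after diagonalizing $x$ and integrating out the eigenvalues one at a time; I would simply cite it.

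The step that needs the most care is the term-by-term integration. Since $Z_\lambda\ge0$ on $\Pi_q$, one has $|J_\mu(x)|\le\sum_{\lambda\ge0}\frac{1}{|(\mu)_\lambda|\,|\lambda|!}Z_\lambda(x)$ there, and applying $(\star)$ with $\mu$ replaced by $\Re\mu$ shows that the associated double series of absolute values converges to $\Gamma_\Omega^q(\Re\mu)\Delta(y)^{-\Re\mu}e^{-tr(y^{-1})}<\infty$; Tonelli's theorem then justifies the interchange. The only genuinely delicate bookkeeping is the arithmetic of the generalized power functions under inversion and index-shift feeding into $(\star)$, which is precisely why anchoring the normalization through Gindikin's formula (and the scalar case) is worthwhile.
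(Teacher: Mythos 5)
Your argument is correct, but it is worth noting that the paper does not prove this proposition at all: it simply cites Proposition XV.2.1 and Corollary VII.1.3 of \cite{FK}. What you have written is essentially a self-contained reconstruction of the standard proof behind that citation: Gindikin's Laplace formula for the power functions $\Delta_{\mathbf s}$, the unfolding/re-averaging over $U_q$ to pass to the spherical polynomials, the index-shift identity $\Gamma_\Omega^q(\lambda+\mu\mathbf 1)=(\mu)_\lambda\,\Gamma_\Omega^q(\mu)$, and then termwise resummation of the $_0F_1$ series using \eqref{power-tr}. So your route buys a complete proof where the paper only offers a pointer, at the cost of redoing material available in \cite{FK}; the paper's choice is reasonable since the identity is classical, but nothing in your derivation is circular or misattributed.

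Two small points deserve tightening. First, in the Tonelli step you assert that the double series of absolute values \emph{converges to} $\Gamma_\Omega^q(\Re\mu)\Delta(y)^{-\Re\mu}e^{-tr(y^{-1})}$; what is actually true (and all you need) is that it is \emph{dominated} by this quantity, via the elementary inequality $|(\mu)_\lambda|\ge(\Re\mu)_\lambda$, which holds term by term because $|\mu-\tfrac d2(j-1)+k|\ge\Re\mu-\tfrac d2(j-1)+k>0$ under the hypothesis $\Re\mu>d(q-1)/2$ --- this hypothesis is also exactly what guarantees $(\mu)_\lambda\neq 0$ throughout. Second, Gindikin's formula as stated in \cite{FK} produces on the right-hand side the power function of $y^{-1}$ with respect to the \emph{opposite} triangular group (the ``rotated'' minors), not $\Delta_{\lambda+\mu\mathbf 1}$ itself; this discrepancy is harmless in your argument precisely because you re-average over $u\in U_q$ at the end, and the two power functions have the same $U_q$-average $\Phi_\lambda$, but the step should be flagged rather than passed over silently, since without the symmetrization the unaveraged identity you write is false as stated. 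With these two clarifications your proof is complete and agrees with the classical one.
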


By the transformation formula for linear maps, we have for $m\in \Pi_q$
$$\int_{\Pi_q} f(mxm)\> dx=\Delta(m)^{2n/q}\int_{\Pi_q} f(x)\> dx.$$
 We thus obtain for
 $\mu\in\mathbb C$ with $\Re\mu>d(q-1)/2$, $m\in\Pi_q$ and $y\in\Omega_q$, that
\begin{equation}\label{laplace-trafo-modi}
\int_{\Pi_q} J_\mu(xm)e^{-\langle x,y\rangle} \Delta(x)^{\mu-n/q}\> dx 
= \Gamma_\Omega^q(\mu)\cdot \Delta(y)^{-\mu} \cdot e^{-tr(my^{-1})};
\end{equation}
c.f. equation (2.5) of \cite{H} for $\mathbb F=\mathbb R$, where a minus sign in
the
exponential is missing.
We notice that (\ref{laplace-trafo-modi}) may be also interpreted as 
a well-known formula for the Hankel transforms of Wishart distributions; see
for instance \cite{FK} or Section 5 of \cite{V1}.
Using injectivity and the convolution theorem for the Laplace transform,
 we deduce from (\ref{laplace-trafo-modi}) the following addition theorem:
\begin{proposition}\label{addition}
For all $\mu,\nu\in\mathbb C$ with $\Re\mu>d(q-1)/2$ and $\Re\nu>d(q-1)/2$ and
all $m_1,m_2\in\Pi_q$,
\begin{align} &J_{\mu+\nu}(x(m_1+m_2))\cdot\Delta(x)^{\mu+\nu-n/q}=\\&=
\frac{\Gamma_\Omega^q(\mu+\nu)}{\Gamma_\Omega^q(\mu)\Gamma_\Omega^q(\nu)}
\int_{\{y\in\Pi_q:\> y\le x\}} J_\mu(ym_1) \Delta(y)^{\mu-n/q} J_\nu((x-y)m_2)
\Delta(x-y)^{\nu-n/q} \>dy.
\notag\end{align}
\end{proposition}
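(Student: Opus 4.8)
The plan is to read equation (\ref{laplace-trafo-modi}) as a statement about the Laplace transform on the cone $\Pi_q$ and then to combine the convolution theorem with injectivity, exactly as the sentence preceding the proposition announces. For $m\in\Pi_q$ and $\mu$ with $\Re\mu>d(q-1)/2$ I introduce the function $g_{\mu,m}(x):=J_\mu(xm)\,\Delta(x)^{\mu-n/q}$ on $\Pi_q$. Since $\Re\mu>d(q-1)/2$ is precisely the convergence range in Proposition \ref{laplace-trafo}, the factor $\Delta(x)^{\mu-n/q}$ is integrable against $e^{-\langle x,y\rangle}\,dx$ on $\Pi_q$ for every $y\in\Omega_q$, so the Laplace transform $(\mathcal{L}g_{\mu,m})(y):=\int_{\Pi_q}g_{\mu,m}(x)\,e^{-\langle x,y\rangle}\,dx$ is well defined, and (\ref{laplace-trafo-modi}) reads $(\mathcal{L}g_{\mu,m})(y)=\Gamma_\Omega^q(\mu)\,\Delta(y)^{-\mu}\,e^{-tr(my^{-1})}$.

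First I multiply the transforms for the two pairs $(\mu,m_1)$ and $(\nu,m_2)$. Using $\Delta(y)^{-\mu}\Delta(y)^{-\nu}=\Delta(y)^{-(\mu+\nu)}$ together with $e^{-tr(m_1y^{-1})}e^{-tr(m_2y^{-1})}=e^{-tr((m_1+m_2)y^{-1})}$, the product collapses to a constant multiple of the transform of $g_{\mu+\nu,\,m_1+m_2}$:
\[
(\mathcal{L}g_{\mu,m_1})(y)\cdot(\mathcal{L}g_{\nu,m_2})(y)=\frac{\Gamma_\Omega^q(\mu)\Gamma_\Omega^q(\nu)}{\Gamma_\Omega^q(\mu+\nu)}\,(\mathcal{L}g_{\mu+\nu,\,m_1+m_2})(y),\qquad y\in\Omega_q.
\]

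Next I invoke the convolution theorem for the Laplace transform on the cone. Writing the left-hand side as a double integral over $\Pi_q\times\Pi_q$ and performing the linear substitution $(z,w)\mapsto(z,x)$ with $x=z+w$ (a shear, hence with Jacobian $1$), the integration domain becomes $\{(z,x):0\le z\le x\}$, so the product of transforms equals $\mathcal{L}$ applied to the cone convolution $(g_{\mu,m_1}*g_{\nu,m_2})(x)=\int_{\{y\in\Pi_q:\,0\le y\le x\}}g_{\mu,m_1}(y)\,g_{\nu,m_2}(x-y)\,dy$. Comparing this with the displayed identity and using injectivity of the Laplace transform on $\Pi_q$ (the uniqueness theorem for Laplace transforms on symmetric cones, cf.\ \cite{FK}), I may equate the two functions whose transforms agree. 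Substituting back the definitions of $g_{\mu,m_1}$, $g_{\nu,m_2}$ and $g_{\mu+\nu,m_1+m_2}$ and rearranging yields exactly formula (\ref{addition}).

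The main technical obstacle is the legitimacy of the Fubini interchange underlying the convolution theorem, and pinning down the range of validity. Absolute convergence of the double integral is immediate for $y\in\Omega_q$ once $\Re\mu,\Re\nu>d(q-1)/2$, since each factor is then absolutely Laplace-integrable by Proposition \ref{laplace-trafo}. If one prefers to avoid estimating the behaviour of $J_\mu$ on the cone directly, one can first establish the identity for $\Re\mu,\Re\nu$ large, where convergence is unproblematic, and then extend to the stated range by analytic continuation: both sides of (\ref{addition}) are holomorphic in $(\mu,\nu)$ on $\{\Re\mu>d(q-1)/2\}\times\{\Re\nu>d(q-1)/2\}$ (on the right the $y$-integral runs over the compact order interval $\{0\le y\le x\}$ and converges locally uniformly in this region), so the identity theorem transports the identity to the full domain.
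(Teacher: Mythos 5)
Your proof is correct and follows essentially the same route as the paper, which deduces the proposition in one stroke from \eqref{laplace-trafo-modi} ``using injectivity and the convolution theorem for the Laplace transform''. Your additional care about Fubini and the analytic-continuation fallback (with local uniform convergence of the $y$-integral over the compact order interval for $\Re\mu,\Re\nu>d(q-1)/2$) fills in details the paper leaves implicit, but introduces no new idea.
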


Taking $m_1=m$, $m_2=0$,  $x=I_q$ (the identity matrix), and defining
$$\Pi_q^I:=\{y\in\Pi_q:\> y\le I_q\},$$
 we obtain the following Sonine-type integral representation of  
$J_{\mu+\nu}$ in terms of  $J_{\mu},$ which was for $\mathbb F= \mathbb R$
already proven in \cite{H}:

\begin{corollary}\label{int-rep-allg}
For all $\mu,\nu\in\mathbb C$ with $\Re\mu,\Re\nu >d(q-1)/2$  and $m\in\Pi_q$,
$$J_{\mu+\nu}(m)=\frac{\Gamma_\Omega^q(\mu+\nu)}{
\Gamma_\Omega^q(\mu)\Gamma_\Omega^q(\nu)}
\int_{ \Pi_q^I } J_\mu(ym) \Delta(y)^{\mu-n/q}\Delta(I_q-y)^{\nu-n/q} \>dy.$$
\end{corollary}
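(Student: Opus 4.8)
The plan is to read off the corollary directly from the addition theorem of Proposition~\ref{addition} by a single specialization of the free parameters. Concretely, I would set $m_1=m$, $m_2=0$ and $x=I_q$ in the identity of Proposition~\ref{addition}; since $m\in\Pi_q$ and $0\in\Pi_q$, the hypotheses $\Re\mu,\Re\nu>d(q-1)/2$ carry over verbatim, so no new range restrictions arise.

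With this choice the left-hand side collapses at once: $x(m_1+m_2)=I_q m=m$ and $\Delta(x)=\Delta(I_q)=1$, so the factor $\Delta(x)^{\mu+\nu-n/q}$ disappears and the left-hand side equals $J_{\mu+\nu}(m)$. On the right-hand side the integration region $\{y\in\Pi_q:\, y\le x\}$ becomes $\{y\in\Pi_q:\, y\le I_q\}=\Pi_q^I$, matching the set defined just above the statement. The only nontrivial-looking factor is $J_\nu((x-y)m_2)=J_\nu((I_q-y)\cdot 0)=J_\nu(0)$, which I would evaluate from the series \eqref{power-j}: only the $\lambda=0$ term survives at the origin, and $Z_0\equiv 1$ by the $k=0$ case of \eqref{power-tr}, whence $J_\nu(0)=1$. (Here $J_\nu$ is genuinely holomorphic at $0$ because $\Re\nu>d(q-1)/2$ forces $(\nu)_\lambda\ne 0$ for every $\lambda$.) After these simplifications the remaining integrand is $J_\mu(ym)\,\Delta(y)^{\mu-n/q}\,\Delta(I_q-y)^{\nu-n/q}$, and the prefactor $\Gamma_\Omega^q(\mu+\nu)/(\Gamma_\Omega^q(\mu)\Gamma_\Omega^q(\nu))$ is already in the desired form, so the claimed formula follows.

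I do not expect a genuine obstacle here: the entire content lies in Proposition~\ref{addition}, and the corollary is a clean boundary specialization of it. The only points needing a word of justification are $\Delta(I_q)=1$ and $J_\nu(0)=1$, both immediate. As a sanity check one could alternatively rederive the formula straight from the Laplace-transform identity \eqref{laplace-trafo-modi} by taking $y$-Laplace transforms of both sides and invoking injectivity of the transform, but the specialization of the already-proven addition theorem is by far the shortest route.
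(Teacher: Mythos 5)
Your proposal is correct and is exactly the paper's own proof: the authors obtain the corollary by taking $m_1=m$, $m_2=0$ and $x=I_q$ in Proposition~\ref{addition}, just as you do, with the same trivial evaluations $\Delta(I_q)=1$ and $J_\nu(0)=1$. Your extra remarks on why $J_\nu(0)=1$ (only the $\lambda=0$ term of \eqref{power-j} survives) and on the parameter range are fine but not needed beyond what the paper leaves implicit.
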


Notice that for $m=0$ this formula implies the known beta integral
\begin{equation}\label{beta}
\int_{\Pi_q^I }  \Delta(y)^{\mu-n/q}\Delta(I_q-y)^{\nu-n/q} \>dy
=\frac{\Gamma_\Omega^q(\mu)\Gamma_\Omega^q(\nu)}{\Gamma_\Omega^q(\mu+\nu)}
=:B_\Omega^q(\mu,\nu).
\end{equation}
Using these notions, we  define the beta distributions
$d\beta_{q;\mu,\nu}\in M^1(\Pi_q^I )$ on $\Pi_q^I$ for  real parameters
$\mu,\nu> d(q-1)/2$ by
 $$d\beta_{q;\mu,\nu}(y):=
\frac{1}{ B_\Omega^q(\mu,\nu)}  \Delta(y)^{\mu-n/q}\Delta(I_q-y)^{\nu-n/q}
\>dy$$
with $n=n(q,\mathbb F)$ as in Section 2.
Using Lemma \ref{properties-phi}(1), we obtain the following explicit form of
relation \eqref{bochner_1}:

\begin{corollary}\label{int-rep-allg-phi}
For all $\mu,\nu\in\mathbb C$ with $\Re\mu,\Re\nu>d(q-1)/2$ and $s,x\in\Pi_q$,
$$\phi_s^{\mu+\nu}(x)=
\int_{\Pi_q^I } \phi_{\sqrt{sys}}^\mu(x)\> d\beta_{q;\mu,\nu}(y).$$
\end{corollary}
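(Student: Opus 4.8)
The plan is to read off the claim directly from Corollary \ref{int-rep-allg}. In terms of the beta distribution $d\beta_{q;\mu,\nu}$, that corollary states that $J_{\mu+\nu}(m)=\int_{\Pi_q^I}J_\mu(ym)\,d\beta_{q;\mu,\nu}(y)$ for every $m\in\Pi_q$. The idea is to specialize $m$ so that the left-hand side becomes $\phi_s^{\mu+\nu}(x)$ and the integrand becomes $\phi_{\sqrt{sys}}^\mu(x)$, after which nothing remains but to match the two sides.

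First I would invoke the symmetry relation of Lemma \ref{properties-phi}(1): since $\phi_s^{\mu+\nu}(x)=\phi_x^{\mu+\nu}(s)=J_{\mu+\nu}(\frac{1}{4}sx^2s)$, the natural choice is $m:=\frac{1}{4}sx^2s$. For $s,x\in\Pi_q$ this indeed lies in $\Pi_q$, because $\frac{1}{4}sx^2s=\frac{1}{4}(xs)^*(xs)$. With this choice the corollary gives $\phi_s^{\mu+\nu}(x)=\int_{\Pi_q^I}J_\mu(\frac{1}{4}ysx^2s)\,d\beta_{q;\mu,\nu}(y)$, so it only remains to identify the integrand.

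The key step is to show $J_\mu(\frac{1}{4}ysx^2s)=\phi_{\sqrt{sys}}^\mu(x)$. Here $sys\in\Pi_q$ for $s\in\Pi_q$ and $y\in\Pi_q^I$, so $\sqrt{sys}$ is well defined and $\phi_{\sqrt{sys}}^\mu(x)=J_\mu(\frac{1}{4}x\,sys\,x)=J_\mu(\frac{1}{4}xsysx)$. Setting $C:=sx$ one has $ysx^2s=yCC^*$ and $xsysx=C^*yC$, and the two matrices $yCC^*=(yC)C^*$ and $C^*yC=C^*(yC)$ therefore share the same eigenvalues by the cyclic identity $\mathrm{eig}(PQ)=\mathrm{eig}(QP)$. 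Since $J_\mu$ is built from the conjugation-invariant zonal polynomials $Z_\lambda$ and hence depends only on the eigenvalues of its argument, the two values of $J_\mu$ coincide; this completes the identification and thus the proof.

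I expect the main (and essentially only) obstacle to be the bookkeeping in this last step: one must make precise that $J_\mu$ is a well-defined function of the eigenvalues alone, so that it may legitimately be evaluated on the products $ysx^2s$ and $xsysx$ of positive semidefinite factors even though these products are not Hermitian, and that the cyclic eigenvalue identity applies in this non-Hermitian setting. Everything else is a direct substitution into Corollary \ref{int-rep-allg} combined with the symmetry from Lemma \ref{properties-phi}(1).
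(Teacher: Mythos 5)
Your proposal is correct and is exactly the paper's (implicitly sketched) argument: the authors obtain Corollary \ref{int-rep-allg-phi} by combining Corollary \ref{int-rep-allg} with the symmetry $\phi_s^\mu(r)=\phi_r^\mu(s)$ of Lemma \ref{properties-phi}(1), i.e.\ by taking $m=\frac14 sx^2s$ and identifying the integrand through invariance of $J_\mu$ under the eigenvalue-preserving cyclic permutation, just as you do. For the bookkeeping step you worry about, note you can avoid non-Hermitian arguments entirely by setting $A:=\sqrt{y}\,sx$, since then $\sqrt{y}\,sx^2s\sqrt{y}=AA^*$ and $x\,sys\,x=A^*A$ are both in $\Pi_q$ with equal eigenvalues, so only values of $J_\mu$ on $\Pi_q$ are ever needed.
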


\begin{remarks}\begin{enumerate}\itemsep=-1pt
 \item[\rm{(1)}] We conjecture that for all real parameters  $\nu\ge 0$, $
\mu>d(q-1)/2$
   and all $s\in\Pi_q$
there is a probability measure
$m_{\mu,\nu,s}\in M^1(\Pi_q)$ with compact support such that 
$$\phi_s^{\mu+\nu}(x)=\int_{\Pi_q}\phi_y^{\mu}(x) \> dm_{\mu,\nu,s}(y)   \,\,
\text{ for all }\,x\in\Pi_q $$
\item[\rm{(2)}] One can easily combine Corollary \ref{int-rep-allg-phi} with
  Lemma \ref{konv-matrix-bessel} for $\nu\to\infty$. 
This yields an explicit integral representation of the Olshanski spherical
  functions $\psi_{b}$ of Section 4 for $b\in\Pi_q$
in terms of the $\phi_s^\mu$. This formula describes  $\psi_{b}$ as the
Hankel transform of order $\mu$ of some Wishart distribution and 
 is  well-known, see Proposition XV.2.1 of \cite{FK} and Lemma 5.4. of
\cite{V1}, 
which is adapted to our notation.
\end{enumerate}
\end{remarks}

Next, we derive
 Corollaries \ref{int-rep-allg} and \ref{int-rep-allg-phi} 
in a different, more geometric way as follows:
Consider integers $1\le q\le\tilde p$ and $p\ge 2\tilde p$.
 For  $p,\tilde p$  define the associated parameters 
$\mu:=pd/2$ and $\tilde \mu:=\tilde pd/2$ of the  Bessel functions on $\Pi_q$. 
>From the 
   fact that the $\phi_\lambda^\mu$ are spherical  functions we conclude that
for $\lambda,x\in\Pi_q$
$$\phi_\lambda^\mu(x)=\int_{U_p} \exp\Bigl( i\cdot\Re
tr\Bigl((x,0) \,u\! \begin{pmatrix} \lambda\\ 0\end{pmatrix}  
\Bigr)\Bigr)du$$
with $(x,0)\in M_{q,p}$ and 
$\begin{pmatrix} \lambda\\ 0\end{pmatrix} \in M_{p,q}$
(see also \cite{R1}).
Now let $w\in B_{\tilde p}$ be the upper left $\tilde p\times \tilde p$-block of
$u$, i.e.
$\,u=\begin{pmatrix}w&*\\ *&*\end{pmatrix}.$ In the following, 
 $C_1,C_2,C_3    $ are constants depending on $q,\tilde p,p,d$.
 We first conclude from Lemma 2.1 of \cite{R2} that for  $p\ge 2\tilde p,$
$$\phi_\lambda^\mu(x)=C_1\int_{B_{\tilde p}}
 \exp\Bigl( i\cdot\Re tr\Bigl((x,0)w\begin{pmatrix}\lambda \\ 0\end{pmatrix}
\Bigr)\Bigr)\cdot \Delta(I-w^*\!w)^{pd/2 -d(\tilde p-1/2)-1}dw$$
where now $(x,0)\in M_{q,\tilde p}$ and
 $\begin{pmatrix}\lambda \\ 0\end{pmatrix} \in M_{\tilde p,q}$.
 Further, integration with respect to polar coordinates on $M_{\tilde p}$ 
according to \cite{FT} (see also p.759 of \cite{R1}) gives
$$\int_{M_{\tilde p}} f(w)dw =\,C_2 \int_{\Omega_{\tilde p}}  \int_{U_{\tilde
p}} f(u\sqrt r) \cdot \Delta(r)^{d/2-1} du\,dr.$$
Therefore
\begin{align}\phi_\lambda^\mu(x)=\,C_3\int_{\Pi_{\tilde p}^I} \Bigl(
\int_{U_{\tilde p}} 
 \exp\Bigl( i\cdot&\Re tr\Bigl((x,0)u\sqrt r \begin{pmatrix}\lambda \\
0\end{pmatrix}
\Bigr)\Bigr) du\Bigr)\cdot \notag\\
& \cdot \Delta(r)^{d/2-1}\Delta(I-r)^{pd/2 -d(\tilde p-1/2)-1}dr.\notag
\end{align}
By the definition of $\phi_\lambda^{\tilde \mu}$, we obtain
$$\phi_\lambda^\mu(x)=\,C_3 \int_{\Pi_{\tilde p}^I}
\phi_{\lambda(r)}^{\tilde \mu}(x)\Delta(r)^{d/2-1}\Delta(I-r)^{pd/2 -d(\tilde
p-1/2)-1}dr$$
with $\lambda(r):= \sqrt{(\lambda,0) r\begin{pmatrix}\lambda \\
0\end{pmatrix}}\in \Pi_q$.

Putting $x=0$
 and using (\ref{beta}) for $\tilde p$ instead of  $q$, we finally obtain from
the definition of $n$
that
\begin{equation}\label{mod-int-rep}
\phi_\lambda^\mu(x)=\int_{\Pi_{\tilde p}^I}\phi_{\lambda(r)}^{\tilde \mu}(x)\>
 d\beta_{\tilde p;\tilde pd/2,d(p-\tilde p)/2}(r).
\end{equation}
 We now compare equation (\ref{mod-int-rep}) with Corollary
\ref{int-rep-allg-phi}
 for 
$\mu=\tilde pd/2, \, \nu=(p-\tilde p)d/2$ and arbitrary $x\in\Pi_q$.
By the injectivity of the Hankel transform of index $\tilde p$ on $\Pi_q$ and by
analytic continuation
with respect to $\mu$ and $\nu$, we shall obtain
 the following projection result for  multivariate beta
 distributions:

\begin{proposition}\label{proj-beta}
 For integers $\tilde p\ge q\ge 1,$ consider the projection
$$P_{\tilde p,q}:\Pi_{\tilde p}^I\to \Pi_q^I,\quad r=\begin{pmatrix} y&*\\ *&
*\end{pmatrix}\longmapsto y.$$
Then for all real-valued parameters $\mu,\nu> d(\tilde p-1)/2$, the measure
$\beta_{q;\mu,\nu}$ is 
the push-forward
of the measure $\beta_{\tilde p;\mu,\nu}$ under $P_{\tilde p,q},$
$$P_{\tilde p,q}(\beta_{\tilde p;\mu,\nu})=\beta_{q;\mu,\nu}.$$
\end{proposition}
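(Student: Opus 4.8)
The plan is to read off the desired measure identity by comparing the two integral representations of $\phi_\lambda^{pd/2}(x)$ that precede the statement, and then to remove the arithmetic restriction on the parameters by analytic continuation. Throughout fix integers $q\le\tilde p$ and $p\ge 2\tilde p$, and write $\tilde\mu:=\tilde pd/2$, $\nu:=(p-\tilde p)d/2$, so that $\tilde\mu+\nu=pd/2$. The two representations to be matched are the geometric identity \eqref{mod-int-rep} and Corollary \ref{int-rep-allg-phi} applied with parameters $\tilde\mu,\nu$ and $s=\lambda$.

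First I would simplify the matrix $\lambda(r)$ occurring in \eqref{mod-int-rep}. Writing $r=\left(\begin{smallmatrix} y & * \\ * & *\end{smallmatrix}\right)\in\Pi_{\tilde p}^I$ with $y=P_{\tilde p,q}(r)\in\Pi_q^I$, a one-line block computation gives $(\lambda,0)\,r\left(\begin{smallmatrix}\lambda\\ 0\end{smallmatrix}\right)=\lambda y\lambda$, so that $\lambda(r)=\sqrt{\lambda y\lambda}$ depends on $r$ only through $y$. Hence the integrand in \eqref{mod-int-rep} is constant along the fibres of $P_{\tilde p,q}$, and \eqref{mod-int-rep} rewrites as an integral over $\Pi_q^I$ against the push-forward measure,
\[
\phi_\lambda^{pd/2}(x)=\int_{\Pi_q^I}\phi_{\sqrt{\lambda y\lambda}}^{\tilde\mu}(x)\,dP_{\tilde p,q}(\beta_{\tilde p;\tilde\mu,\nu})(y),
\]
whereas Corollary \ref{int-rep-allg-phi} reads $\phi_\lambda^{pd/2}(x)=\int_{\Pi_q^I}\phi_{\sqrt{\lambda y\lambda}}^{\tilde\mu}(x)\,d\beta_{q;\tilde\mu,\nu}(y)$ since $\tilde\mu+\nu=pd/2$. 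Equating the two and specialising to $\lambda=I_q$, where $\phi_{\sqrt y}^{\tilde\mu}=f_y^{\tilde\mu}$ by \eqref{def-phi-s}, shows that the two compactly supported probability measures $P_{\tilde p,q}(\beta_{\tilde p;\tilde\mu,\nu})$ and $\beta_{q;\tilde\mu,\nu}$ have the same Hankel transform of index $\tilde\mu=\tilde pd/2$ on $\Pi_q$. Injectivity of that transform then gives the proposition for the special parameters $\mu=\tilde pd/2$, $\nu=(p-\tilde p)d/2$, $p\ge 2\tilde p$.

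The main obstacle is the passage to arbitrary real $\mu,\nu>d(\tilde p-1)/2$: the special values obtained so far fill only the discrete line $\{\mu=\tilde pd/2,\ \nu\in\tfrac d2\{\tilde p,\tilde p+1,\dots\}\}$, an arithmetic progression whose only accumulation point is at infinity, so the plain identity theorem does not apply. I would test both sides against a fixed $f\in C(\Pi_q^I)$, obtaining functions $G(\mu,\nu)$ and $H(\mu,\nu)$ holomorphic on the tube $\{\Re\mu,\Re\nu>d(\tilde p-1)/2\}$ and of at most polynomial growth in each variable (the Beta-normalisation contributes ratios of $\Gamma$-functions of polynomial growth, while the remaining integral stays bounded on vertical lines). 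Their difference thus has exponential type $0$ in each variable, so a Carlson-type uniqueness theorem applies: fixing $\mu=\tilde pd/2$ and using the vanishing along the $\nu$-progression gives $G(\tilde pd/2,\nu)=H(\tilde pd/2,\nu)$ first on the half-plane $\Re\nu\ge\tilde pd/2$ and then, by the ordinary identity theorem, on the whole $\nu$-slice of the tube.

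It remains to free the first parameter, which I would do via the transitivity $P_{\ell,q}=P_{\tilde p,q}\circ P_{\ell,\tilde p}$ for $q\le\tilde p\le\ell$. Applying the $\nu$-continuation just described to the pairs $(\ell,q)$ and $(\ell,\tilde p)$ yields $P_{\ell,q}(\beta_{\ell;\ell d/2,\nu})=\beta_{q;\ell d/2,\nu}$ and $P_{\ell,\tilde p}(\beta_{\ell;\ell d/2,\nu})=\beta_{\tilde p;\ell d/2,\nu}$ for all $\nu$; composing gives $P_{\tilde p,q}(\beta_{\tilde p;\ell d/2,\nu})=\beta_{q;\ell d/2,\nu}$. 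Thus the proposition holds for $\mu\in\tfrac d2\{\tilde p,\tilde p+1,\dots\}$ and all $\nu$. Fixing now any $\nu$ and running the same Carlson argument in the variable $\mu$, whose zeros again form an arithmetic progression accumulating only at infinity, forces equality for all $\mu$; a final application of the identity theorem extends it to the whole connected tube, completing the proof. The one technical point to verify carefully is the growth bound underlying Carlson's theorem; the remaining ingredients—the block computation, injectivity of the Hankel transform, and the parameter bookkeeping—are routine.
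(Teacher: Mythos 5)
Your overall route is the paper's: the base case comes from matching \eqref{mod-int-rep} against Corollary \ref{int-rep-allg-phi} (your block computation $(\lambda,0)\,r\begin{pmatrix}\lambda\\ 0\end{pmatrix}=\lambda y\lambda$ is exactly what makes the geometric formula a statement about the push-forward), combined with injectivity of the Hankel transform of index $\tilde p d/2$ on $\Pi_q$; the general case then follows from transitivity of the projections plus Carlson's theorem, whose growth hypotheses the paper likewise disposes of by the estimate ``as in \cite{R1}''. So this is essentially the same proof, but your reordering of the continuation and composition steps introduces one concrete slip.

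Namely, you run the $\nu$-continuation for the pairs $(\ell,q)$ and $(\ell,\tilde p)$ \emph{before} composing, and claim the resulting identities ``for all $\nu$''. But $P_{\ell,q}(\beta_{\ell;\ell d/2,\nu})=\beta_{q;\ell d/2,\nu}$ involves the measure $\beta_{\ell;\ell d/2,\nu}$ on the big cone $\Pi_\ell^I$, which exists and is holomorphic in $\nu$ only for $\Re\nu>d(\ell-1)/2$; your continuation therefore gives the composed identity $P_{\tilde p,q}(\beta_{\tilde p;\ell d/2,\nu})=\beta_{q;\ell d/2,\nu}$ only on this $\ell$-dependent half-plane, which shrinks as $\ell$ grows. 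Consequently, when you next ``fix any $\nu$'' and run Carlson in $\mu$, a $\nu$ close to $d(\tilde p-1)/2$ admits only the finitely many $\ell$ with $d(\ell-1)/2<\nu$, and the infinite arithmetic progression of zeros your Carlson step needs does not exist. The repair is one line: the composed identity involves only $\beta_{\tilde p;\ell d/2,\nu}$ and $\beta_{q;\ell d/2,\nu}$, so both sides, tested against a fixed $f$, are holomorphic in $\nu$ on the full half-plane $\Re\nu>d(\tilde p-1)/2$ and agree on the real ray $\nu>d(\ell-1)/2$; the ordinary identity theorem then extends the equality to that half-plane, after which your Carlson argument in $\mu$ launches for every fixed $\nu$. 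The paper sidesteps the issue entirely by composing first at the discrete lattice $(\mu,\nu)=(dr/2,\,d(p-r)/2)$ with $r\ge\tilde p$, $p\ge 2r$ --- where the intermediate identities need no continuation --- and only then applying Carlson in the two variables.
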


\begin{proof} Assume first that $\mu=d\tilde p/2$ and $\nu=d(p-\tilde p)/2$
  for some integer $p\ge 2\tilde p$. In this case, the statement follows from
  the preceding arguments.

Now consider integers $r\ge \tilde p\ge q$ and  $p\ge 2r$. We
obtain 
\begin{align}
\beta_{q;dr/2,d(p-r)/2}&=P_{r,q}(\beta_{r;dr/2,d(p-r)/2})=P_{\tilde p,q}\circ
P_{r,\tilde p}(\beta_{r;dr/2,d(p-r)/2})\notag\\
&=P_{\tilde p,q}(\beta_{\tilde
  p;dr/2,d(p-r)/2}).\notag
\end{align}
This equality means that 
 for the parameters $\mu=dr/2$ and $\nu=d(p-r)/2$ with integers $p,r$ with
$r\ge\tilde p$ and $p\ge 2r$, we have for all bounded continuous functions $f\in
C_b( \Pi_q^I)$ the identity
\begin{equation}\label{allg-pro}\int_{\Pi_{\tilde p}^I} f(P(x))\> d\beta_{\tilde
p;\mu,\nu}(x)=
\int_{\Pi_{q}^I} f(y)\> d\beta_{q;\mu,\nu}(y).
\end{equation}
By  definition of the beta distributions,
 both sides of this identity are, for fixed $f$, analytic in the variables 
$\mu,\nu\in\mathbb C$ with $\Re\mu,\Re\nu >d(\tilde p-1)/2 $. Precisely as in
\cite{R1}, it is 
 easily checked that
both sides also satisfy the  exponential growth conditions of Carlson's Theorem
(p. 186
of \cite{T}). 
This yields that (\ref{allg-pro}) holds for general $\mu$ and $\nu$,  which
 finishes the proof.
\end{proof}

We mention that  Proposition \ref{proj-beta} 
can also be derived, at least for $\mathbb F=\mathbb R$ and
$\mu=p/2$ and $\nu=r/2$ with suitable integers $p,r,$ from the construction of
the multivariate beta distributions in statistics; see Section 10.2 of
\cite{Fa} and references cited there.
Let us sketch this approach: we consider $M_{p,\tilde p}\,$- and $M_{r,\tilde
p}\,$-valued
independent
random variables $X$ and $Y$ respectively such that all entries of $X$ and $Y$
are
i.i.d. standard-normal distributed. Then $S:=X^tX$, $T:=Y^tY$, and $S+T$ are
$\Pi_{\tilde p}$ -valued and Wishart-distributed. Now form the unique lower
triangular matrix $C$ with nonnegative entries satisfying $CC^t=S+T$. It is
well-known (Section 10.2 of
\cite{Fa} and references cited there) that $L:=C^{-1}S(C^t)^{-1}$ is a
$\Pi_{\tilde p}^I$ -valued variable with distribution $\beta_{\tilde
p;p/2,r/2}$. 

Now consider some integer $1\le q\le \tilde p$. For any matrix $A\in M_{\tilde
  p}$ we denote by $\tilde A$ its upper left $q\times q$-block. It is
now easily checked from the triagular structure of $C$ and $C^{-1}$ that
$$\tilde L=\widetilde{ C^{-1}}\tilde S \widetilde{ (C^t)^{-1}}= (\tilde
C)^{-1}\tilde S
((\tilde C)^t)^{-1}$$
with $\tilde C\tilde C^t=\tilde S+\tilde T$. This observation readily leads
to a further proof of Proposition \ref{proj-beta} for $\mu=p/2$ and
$\nu=r/2$. Again, application of Carlson's theorem implies the general result.


\begin{thebibliography}{999}

\bibitem[BF]{BF} T. Baker, P. Forrester,  The Calogero-Sutherland model and
generalized
 classical polynomials.\textit{ Comm. Math. Phys.} 188 (1997),  175â216.

\bibitem[DOW]{DOW} M. Dawson, G. Olafsson, J.A. Wolf, Direct systems of
  spherical functions and representations. Preprint 2011, arXiv:1110.0655v1.


\bibitem[Du]{Du}  C.F. Dunkl, Differential-difference operators
associated to reflection groups. \textit{Trans. Amer. Math. Soc.} 311 (1989),
 167--183.


\bibitem[F]{F} J. Faraut, Infinite dimensional Spherical Analysis.
 COE Lecture Notes Vol. 10 (2008), Kyushu University.



\bibitem[FK]{FK} J. Faraut, A. Kor\'anyi, Analysis on symmetric cones. Oxford
 Science Publications,
 Clarendon press, Oxford 1994.

\bibitem[FT]{FT} J. Faraut, G. Travaglini, Bessel functions associated with
 representations 
of formally real Jordan algebras. \textit{J. Funct. Anal.} 71 (1987),
  123--141.

\bibitem[Fa]{Fa} R.H. Farrell, Multivariate Calculus: Use of Continuous
  Groups. Springer Verlag, New York 1985.

\bibitem[H]{H} C.S. Herz, Bessel functions of matrix argument.
\textit{Ann. Math.} 61 (1955), 474--523.



\bibitem[J]{J} R.I. Jewett, Spaces with an abstract convolution of measures,
\textit{Adv. Math. } 18 (1975), 1--101.



\bibitem[M]{M} I.G. Macdonald, Commuting differential operators and zonal
spherical functions. In: Algebraic groups (Utrecht 1986), eds. a.M. Cohen et
al, 
\textit{Lecture Notes in Mathematics} 1271, Springer-Verlag, Berlin, 1987.



\bibitem[Ol1]{Ol1} G.I. Olshanskii, Infinite-dimensional classical groups of
  finite $r$-rank: Description of representations and asymptotic
  theory. \textit{Funct. Anal. Appl.}  18:1 (1984), 22â-34.

\bibitem[Ol2]{Ol2} G.I. Olshanskii, Unitary representations of infinite
  dimensional pairs $(G,K)$
and the formalism of R. Howe. In: A. Vershik and D. Zhelobenko (eds.),
 Representations of Lie Groups 
and Related topics. Adv. Stud. Contemp. Math. 7,  Gordon and Breach, 1990.

\bibitem[O]{O} E. Opdam, Dunkl operators, Bessel functions and the
  discriminant of a finite Coxeter group.  \textit{Compos. Math.} 85 (1993),
333--373.

\bibitem[P]{P}  D. Pickrell, Mackey analysis of infinite classical motion
groups. 
 \textit{Pacific. J. Math.} 150 (1991), 139--166.

\bibitem[R1]{R1} M. R\"osler, Bessel convolutions on matrix cones.
 \textit{Compos. Math.}  143 (2007), 749--779.

\bibitem[R2]{R2} M. R\"osler, Positive convolution structure for a class of
 Heckman-Opdam hypergeometric functions of type $BC$. \textit{J. Funct. Anal.}
258 (2010), 2779--2800.

\bibitem[RKV]{RKV} M. R\"osler, T. Koornwinder, M. Voit,
Limit transition between hypergeometric functions of type BC and type
A. Preprint 2012, arXiv:1207.0487.


\bibitem[RV1]{RV1} M. R\"osler, M. Voit, 
A limit relation for Dunkl-Bessel functions of type A and B. 
  \textit{Symmetry Integrability Geom. Methods Appl.} 4, Paper 083 (2008).

\bibitem[RV2]{RV2} M. R\"osler, M. Voit,
 Limit theorems for radial random walks on $p\times q$-matrices as $p$ tends to
infinity
 \textit{Math. Nachr.} 284 (2011),  87-104.

   
\bibitem[S]{S} I.J.~Schoenberg, Metric spaces and completely monotone functions;
 \textit{Ann. Math.} 39 (1938), 811--841.
 
\bibitem[T]{T} E.C. Titchmarsh, The theory of functions. Oxford Univ. Press,
London, 2nd ed., 1939.

\bibitem[V1]{V1} M.  Voit,  Bessel convolutions on matrix cones: Algebraic
properties and random walks. 
 \textit{J. Theor. Probab.} 22 (2009), 741-771.              

\bibitem[V2]{V2} M.  Voit, 
Central limit theorems for radial random walks on $p \times q$ matrices for $p
\to \infty$. 
\textit{Adv. Pure Appl. Math.} 3 (2012), 231-246. 


\bibitem[W1]{W1} J.A. Wolf, Spherical functions on Euclidean space, \textit{J.
Funct. Anal.} 239
  (2006), 127-136.

\bibitem[W2]{W2} J.A. Wolf,
Harmonic analysis on commutative spaces. 
 American Mathematical Society (AMS), Providence, 2007.




\end{thebibliography}
\end{document}